\documentclass[11pt]{amsart}

\usepackage{amsmath,amsthm,amsfonts,amssymb,bm,graphicx}
\usepackage{mathrsfs}
\usepackage[alphabetic,initials]{amsrefs}
\usepackage[colorlinks=true,citecolor=blue,linkcolor=blue,urlcolor=blue,
pdfstartview=FitH]{hyperref}

\usepackage[margin=1.1in]{geometry}

\theoremstyle{plain}
\newtheorem{theorem}{Theorem}[section]

\newtheorem{lemma}{Lemma}[section]

\theoremstyle{definition}
\newtheorem*{acknowledgments}{Acknowledgement}   
\newtheorem{remark}{Remark}[section]
\newtheorem*{notation}{Notation and conventions}

\renewcommand{\geq}{\geqslant}
\renewcommand{\leq}{\leqslant}

\DeclareMathOperator{\GL}{GL}

\newcommand{\sym}{\mathrm{sym}}

\newcommand{\Kl}{\operatorname{Kl}}

\numberwithin{equation}{section}

\makeatletter
\def\Ddots{\mathinner{\mkern1mu\raise\p@
\vbox{\kern7\p@\hbox{.}}\mkern2mu
\raise4\p@\hbox{.}\mkern2mu\raise7\p@\hbox{.}\mkern1mu}}
\makeatother
\makeatletter
\DeclareRobustCommand\widecheck[1]{{\mathpalette\@widecheck{#1}}}
\def\@widecheck#1#2{%
    \setbox\z@\hbox{\m@th$#1#2$}%
    \setbox\tw@\hbox{\m@th$#1%
       \widehat{%
          \vrule\@width\z@\@height\ht\z@
          \vrule\@height\z@\@width\wd\z@}$}%
    \dp\tw@-\ht\z@
    \@tempdima\ht\z@ \advance\@tempdima2\ht\tw@ \divide\@tempdima\thr@@
    \setbox\tw@\hbox{%
       \raise\@tempdima\hbox{\scalebox{1}[-1]{\lower\@tempdima\box
\tw@}}}%
    {\ooalign{\box\tw@ \cr \box\z@}}}
\makeatother

\begin{document}

\title[Rankin--Selberg coefficients in arithmetic progressions]
{Rankin--Selberg coefficients in arithmetic progressions modulo prime powers}

\begin{abstract}
Let $\varepsilon>0$ be given. For prime power moduli $q=p^k$ with $k\geq 2$ and $p\neq 3$, and assuming the Ramanujan--Petersson conjecture for $\GL_2$ Maass forms, we prove that the Rankin--Selberg coefficients $\{\lambda_f(n)^2\}_{n\geq 1}$ have a level of distribution 
$\theta=2/5+3/305-\varepsilon$ in arithmetic progressions $n \equiv a \bmod q$.
\end{abstract}

\author{Tengyou Zhu}
\address{School of Mathematics, Shandong University
                       \\Jinan, Shandong 250100, China}
\email{zhuty@mail.sdu.edu.cn}

\date{\today}

\keywords{Rankin--Selberg coefficients, Arithmetic progressions,
           hyper-Kloosterman sums, delta method}
\subjclass[2020]{11B25, 11F11, 11F30, 11L05, 11N37, 11T23}
\maketitle	

\section{Introduction}\label{Intr.}

The study of arithmetic functions along progressions is one of the most important 
and central problems in number theory.
Given an arithmetic function $g:\mathbb{N}\rightarrow\mathbb{C}$, 
one can study the distribution of $g(n)$ by asking how uniformly 
the values $g(n)$ are distributed when we vary $n$ over arithmetic progressions $n \equiv a \bmod q$.
The goal is to show that, as $X\rightarrow \infty$, the asymptotic formula
\begin{equation}\label{level-of-distribution}
\mathop{\sum_{n\leq X}}_{n\equiv a \bmod q}g(n)-\frac{1}{\varphi(q)}\mathop{\sum_{n\leq X}}_{(n,q)=1}g(n)\ll_A \frac{X}{q}\left(\log X\right)^{-A}
\end{equation}
holds for $q\leq X^{\theta-\varepsilon}$,
with $\theta$ as large as possible.
Here $\varphi$ is the Euler totient function and the exponent
$\theta$ is called the level of distribution.
Similar to the Elliott--Halberstam conjecture \cite{EH}
for the distribution of primes in arithmetic progressions,
it is widely believed that one can take $\theta=1$.

As $q$ becomes large, taking $g(n)=\Lambda(n)$, the von Mangoldt function, 
the classical Siegel--Walfisz theorem implies that \eqref{level-of-distribution}
holds for $q\leq (\log X)^{A}$ for any constant $A$, whereas the Generalized Riemann Hypothesis
predicts $q\leq X^{1/2-\varepsilon}$. The significant result is that the Bombieri--Vinogradov theorem
confirms this prediction on average over the moduli.

The problem is interesting and meaningful
when $g(n)$ is the sequence of Fourier coefficients of some automorphic form.
Let
$$
L(\pi, s)=\sum_{n\geq 1}\frac{\lambda_{\pi}(n)}{n^s}=\prod_p L(\pi_p,s), \quad \Re(s)>1
$$
be an automorphic $L$-function of some degree $d\geq 2$.

For $\pi$ a $\GL_d$ automorphic form, by applying the functional equation
for the twisted $L$-function $L(\pi\otimes\chi, s)$ and appealing to
Deligne's bound
\begin{equation}\label{Deligne estimate}
|\Kl_d(a;q)|\leq d^{\omega(q)},
\end{equation}
possibly subject to the Ramanujan conjecture $|\lambda_\pi(n)|\leq n^{\varepsilon}$,
one can obtain a level of distribution $\theta=\frac{2}{d+1}$ (cf.~\cite[Remark 2]{HLW}).
Here $\chi \bmod q$ is a primitive Dirichlet character and
$$
\Kl_d(a;q)=\frac{1}{q^{(d-1)/2}}
\sum_{\substack{x_1,\ldots,
x_d\in \mathbb{Z}/q \mathbb{Z}\\x_1\cdots
x_d=a}}e\Bigl(\frac{x_1+\cdots+x_d}{q}\Bigr),
$$
is the $d$-th hyper-Kloosterman sum.
According to~\cite[Section 1.1]{KLM}, such an exponent is regarded as \emph{the standard level of distribution} of the sequence $\{\lambda_{\pi}(n)\}_{n\geq1}$ for individual moduli. To beat this exponent for various $g$ is an active area of research. See~\cite{FI, KMS} for some cases where a better level of distribution is known.

More recently,
Kowalski--Lin--Michel~\cite[Theorem 1.3]{KLM} proved the exponent of distribution $\theta=2/5+1/260-\varepsilon$
for the Rankin--Selberg coefficients $\{\lambda_f(n)^2\}_{n\geq1}$ to prime moduli, where $f$ is a $\GL_{2,\mathbb{Q}}$ automorphic representation. Their key input is to obtain non-trivial bounds for the correlation sums (see \cite[Theorem 1.4]{KLMS})
$$
S_{V}(K;X)=\sum_{n\geq1}\lambda_\pi(n) K(n)V\left(\frac{n}{X}\right),
$$
where $\pi$ is a $\GL_{3,\mathbb{Q}}$ automorphic representation and $K(n)$ is the trace function associated to a suitable
$\ell$-adic middle extension sheaf $\mathcal{F}$ on the
affine line $\mathbf{A}_{\mathbb{F}_q}^1$.

In this paper, inspired by the work of Kowalski--Lin--Michel \cite{KLM}, we obtain the following result for the Rankin--Selberg coefficients $\{\lambda_f(n)^2\}_{n\geq1}$ for the prime-power moduli case.

\begin{theorem}\label{main-theorem1}
Let $f$ be either a holomorphic Hecke eigencuspform or a Hecke--Maass cusp form of full level whose Hecke eigenvalues are denoted $\{\lambda_f(n)\}_{n\geq 1}$ and let $\varpi_f\geq 0$ be such that for any $\varepsilon>0$ the following bound  holds
\begin{equation}\label{RPbound}
    |\lambda_f(n)|\leq n^{\varpi_f+\varepsilon} \quad  (n \rightarrow \infty) 
\end{equation}
and let
\begin{equation*}
   \theta_{f}=\frac{25}{61(1+4\varpi_f)}.
\end{equation*}
Let  $q=p^k$ for a prime $p\neq 3$, $k\geq2$,
and $a\geq 1$ be an integer such that $(a,q)=1$.
For any $X\geq 1$  and $\varepsilon>0$ satisfying
\begin{equation}\label{qbound}
q\leq X^{\theta_f-\varepsilon},
\end{equation}
we have
$$
\sum_{\substack{n\geq 1 \\ n\equiv a\bmod q}}\lambda_f(n)^2V\left(\frac{n}{X}\right)-\frac{1}{\varphi(q)}\sum_{\substack{n\geq 1 \\ (n,q)=1}}\lambda_f(n)^2V\left(\frac{n}{X}\right)\ll_{f,V,\varepsilon} p^{3/4}(X/q)^{1-\delta}
$$
for some $\delta=\delta(\varepsilon)>0$. In particular if $f$ is holomorphic then $\varpi_f=0$ and
$$
\theta_f=25/61=2/5+3/305.
$$
\end{theorem}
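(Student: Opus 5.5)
We follow the strategy of Kowalski--Lin--Michel, adapted to prime-power moduli. Write $\lambda_f(n)^2=\sum_{ab^2=n}\lambda_{\sym^2 f}(a)$, up to the standard normalisation of the $\GL_3$ lift. The left-hand side then becomes a sum of $\lambda_{\sym^2 f}(m)$, with $m$ restricted to a progression modulo $q$ and weighted by the smooth factor coming from $b$. We first detect the congruence with additive characters modulo $q$, separating $(a,q)=1$ from the imprimitive frequencies. The contribution of the zero frequency together with the divisors $q'\mid q$, $q'<q$, reproduces the main term $\varphi(q)^{-1}\sum_{(n,q)=1}$ up to an acceptable error, by the usual M\"obius manipulation. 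For the remaining terms we have two routes.

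\emph{Route 1} is the standard one. We pass to multiplicative characters and apply the $\GL_3$ functional equation of $L(\sym^2 f\otimes\chi,s)$, or equivalently the $\GL_3$ Voronoi formula. The dual sum then has length about $q^3/X$ and is twisted by $\Kl_3(\cdot\,;q)$. Bounding $\Kl_3$ at prime-power modulus by $3^{\omega(q)}$ (explicit stationary phase here, since $p\ne3$) gives the trivial exponent $1/2$. On its own this is not enough.

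\emph{Route 2} carries the main input: a nontrivial bound for the correlation sum
\[
S(X)=\sum_{n}\lambda_{\sym^2 f}(n)\,\Kl_3(an;q)\,V(n/X),\qquad q=p^k .
\]
To prove it we use the delta method (DFI, or Munshi's conductor-lowering) to separate $\lambda_{\sym^2 f}(n)$ from the Kloosterman factor. We then apply $\GL_3$ Voronoi to the $n$-sum and Poisson summation to the $\Kl_3$ side. After Cauchy--Schwarz we apply Poisson a second time, which leaves complete character sums modulo $p^k$ built from products of $\Kl_3$'s.

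The key arithmetic step is to evaluate these sums by $p$-adic stationary phase, using the explicit formula for $\Kl_3(x;p^k)$ when $p\ne3$ as a sum over cube roots of $x$ of $e(3x^{1/3}/p^k)$ times a Gauss-sum factor. This should give square-root cancellation in the generic (non-diagonal) case. The degenerate loci, where the phases coincide, contribute an extra loss at most a power of $p$, and this is the source of the factor $p^{3/4}$ in the statement. The outcome should be a bound of the shape $S(X)\ll X^{1-\eta}$ for $q$ in a suitable range, with an explicit saving $\eta$. We then insert this bound in the range where Route 1 fails. Optimising the parameters (delta-method modulus, lengths, and the $b$-truncation $b\le B$) against the dual length $q^3/X$ should yield $q\le X^{25/61-\varepsilon}$.

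The dependence on $\varpi_f$ enters in two places. One is the trivial bounds for large $b$ and for non-coprime $n$. The other is the $\GL_3$ Rankin--Selberg input $\sum|\lambda_{\sym^2 f}(n)|^2\ll X^{1+\varepsilon}$ together with pointwise bounds of size $n^{2\varpi_f}$ used in the diagonal terms. Tracking these gives the factor $(1+4\varpi_f)^{-1}$.

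We expect the main obstacle to be the complete exponential sums modulo $p^k$ after the second Poisson step: identifying the stationary points, handling the cases where the critical equations degenerate modulo $p$, and controlling the loss there. We would first try to obtain square-root cancellation for all $k\ge2$ by the stationary-phase lemma for odd and even $k$ separately, isolating the bad residue classes and losing a factor $p^{3/4}$ on them.
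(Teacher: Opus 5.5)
Your starting identity is wrong, and the rest of the plan inherits the error. Since $\sum_n\lambda_f(n)^2n^{-s}=\zeta(s)L(\sym^2 f,s)/\zeta(2s)$, one has
\[
\lambda_f(n)^2=\sum_{d^2r=n}\mu(d)\,\lambda_{1\boxplus\phi}(r),\qquad \lambda_{1\boxplus\phi}=1\star\lambda_{\sym^2 f},\quad \phi=\sym^2 f,
\]
or equivalently $\lambda_f(n)^2=\sum_{ab=n}\mu^2(b)\lambda_{\sym^2 f}(a)$. Your $\sum_{ab^2=n}\lambda_{\sym^2 f}(a)$ is instead the coefficient of $\zeta(2s)L(\sym^2 f,s)$. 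It has lost the factor $\zeta(s)$, and with it a variable that ranges up to $X$; that variable is not a short smooth weight. The problem is therefore genuinely of degree $4$, with standard level $2/(4+1)=2/5$.

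Your own Route 1 exposes the inconsistency. If the sum really reduced to $\GL_3$ coefficients in a progression with a harmless weight, the standard exponent $2/(3+1)=1/2$ would already exceed $25/61=2/5+3/305$, and there would be nothing left to prove. For the same reason, no optimisation against a dual length $q^3/X$ can produce $25/61$.

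What is missing is the degree-$4$ structure. The functional equation of $L((1\boxplus\phi)\times\chi,s)$ gives a dual sum of length $q^4/X$, twisted by $\Kl_4(an;q)$, with coefficients $\sum_{\ell m=n}A(1,m)$. One must then handle the bilinear pieces $\mathcal{B}(L,M)$ with $LM\ll q^4/X$ in three ranges:
\begin{itemize}
\item For $M\ge q^{7/5}$, use the correlation bound of Theorem~\ref{Bound theorem} for $\sum_m A(1,m)\Kl_4(m\ell;q)V(m/N)$. This is a $\Kl_4$ twist; $\Kl_3$ only appears after Poisson in $m$ inside the delta-method argument.
\item For $L\ge q^{3/5+\varepsilon}$, use Poisson in $\ell$.
\item In between, use Cauchy--Schwarz and Poisson in $m$, with Lemma~\ref{2.5} bounding sums of $\Kl_3(x_1;q)\overline{\Kl_3(x_2;q)}$ over $\gamma_1\overline{x_1}-\gamma_2\overline{x_2}+m\equiv 0$.
\end{itemize}
The value $25/61$ comes from balancing $q\le X^{5/12}L^{-5/36}$ against $q\le X^{2/5}L^{1/5}$ at $L_0=X^{3/61}$. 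Your sketch contains none of this $\ell$--$m$ analysis, and that is where the difficulty lies.

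Finally, $\varpi_f$ does not enter through Rankin--Selberg or through diagonal terms. Theorem~\ref{main-theorem2} uses only the unconditional Rankin--Selberg bound and is uniform in $\varpi_f$. The parameter appears only in the trivial bound $|\lambda_{1\boxplus\phi}(r)|\ll r^{2\varpi_f+\varepsilon}$, used for the tail $d>D$ of the M\"obius decomposition. That forces $D=X^{2\varpi_f/(1+4\varpi_f)+\varepsilon}$. Applying Theorem~\ref{main-theorem2} at scale $X/d^2$ for each $d\le D$ then needs $q\le (X/D^2)^{25/61-\varepsilon}$, which is exactly $q\le X^{\theta_f-\varepsilon}$.
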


\begin{remark}
It is unfortunate that, just like in~\cite{KLM}, plugging the Kim--Sarnak bound
$\varpi_f \leq 7/64$ (\cite[Appendix 2]{Kim})
is not sufficient to ensure $\theta_f > 2/5$.
To guarantee $\theta_f > 2/5$, we need to assume $\varpi_f \leq 3/488$.
\end{remark}

\begin{remark}
There is an analytic analog of these questions. In the works \cite{huang, huang1},
Huang proved the following bound for the sharp-cut sum
$$
\sum_{n\leq X}\lambda_f(n)^2
- \frac{L(\sym^2 f, 1)}{\zeta(2)}X = O\bigl( X^{3/5-3/305+\varepsilon}\bigr),
$$
where $L(\sym^2 f, 1)$ denotes the symmetric square $L$-function of $f$, resolving a long standing problem going back to Rankin and Selberg.
\end{remark}

To prove Theorem \ref{main-theorem1}, we first consider
$$
L(f\times f, s):=\zeta(2s)\sum_{n\geq 1}\frac{\lambda_f(n)^2}{n^s}, \quad \Re s > 1,
$$
the Rankin--Selberg $L$-function of $f \times f$. Note that $f \times f = 1\boxplus{\sym^2 f}$,
where ${\sym^2 f}$ denotes the $\GL_{3,\mathbb{Q}}$ symmetric square lift of (the automorphic representation attached to) $f$.
Therefore we obtain
\begin{equation}\label{convident}
L(f\times f, s)=\zeta(s)\sum_{m\geq 1}\frac{\lambda_{{\sym^2 f}}(m)}{m^s}=:\zeta(s)L({\sym^2 f}, s).
\end{equation}
This leads us to consider the more general case $\phi$, where $\phi$ is a Hecke--Maass cusp form (not necessarily symmetric) for $\GL_3$ and $\{A(1, n)\}_{n\geq 1}$ are the coefficients of its standard $L$-function $L(\phi, s)$. Set
$$
\lambda_{1\boxplus\phi}(n):=1\star A(1, n):=\sum_{\ell m = n}A(1, m).
$$
Theorem \ref{main-theorem1} is then a simple consequence of

\begin{theorem}\label{main-theorem2}
Let  $q=p^k$ for a prime $p\neq 3$, $k\geq2$, and $a\geq 1$ be an integer such that $(a,q)=1$. For any $\varepsilon>0$ and $X$ satisfying
\begin{equation}
    \label{qbound2}
    q\leq X^{25/61-\varepsilon}=X^{2/5+3/305-\varepsilon},
\end{equation}
we have
$$
\sum_{\substack{n\geq 1 \\ n\equiv a\bmod q}}\lambda_{1\boxplus\phi}(n)V\left(\frac{n}{X}\right)-\frac{1}{\varphi(q)}\sum_{\substack{n\geq 1 \\ (n,q)=1}}\lambda_{1\boxplus\phi}(n)V\left(\frac{n}{X}\right)\ll_{\phi,V,\varepsilon} p^{3/4}(X/q)^{1-\delta}
$$
for some $\delta=\delta(\varepsilon)>0$.	
\end{theorem}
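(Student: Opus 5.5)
Following the strategy of \cite{KLM}, I will reduce the problem to bounding correlation sums of $A(1,n)$ against hyper-Kloosterman sums, now modulo $q=p^k$. First I detect $n\equiv a\bmod q$ with Dirichlet characters modulo $q$. The principal character, and more generally all characters of small conductor, give $\varphi(q)^{-1}\sum_{(n,q)=1}$ up to an acceptable error. So it suffices to bound
\[
\frac{1}{\varphi(q)}\sum_{\chi\neq\chi_0}\overline{\chi}(a)\sum_{n}\lambda_{1\boxplus\phi}(n)\chi(n)V(n/X),
\]
where each $\chi$ has conductor $p^j$ with $1\le j\le k$. The terms with imprimitive $\chi$, i.e.\ $j<k$, are handled the same way as the primitive ones, working with the smaller modulus. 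This is where the $p^{3/4}$ loss in the final bound comes from.

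Next I apply the functional equation of $L(s,\chi)L(s,\phi\otimes\chi)$, a degree-$4$ $L$-function of conductor about $q^4$. This is the Voronoi formula for $\lambda_{1\boxplus\phi}$ twisted by $\chi$. Summing back over $\chi$ turns the dual sum into
\[
\frac{X}{q^{2}}\sum_{m\ll q^{4+\varepsilon}/X}\lambda_{1\boxplus\widetilde\phi}(m)\,\Kl_4(\pm am;q)\,W(mX/q^4)
\]
plus secondary terms. The exact normalisation of the dual sum is to be checked against \cite{KLM}. Deligne's bound \eqref{Deligne estimate} alone recovers $\theta=2/5$. To beat $2/5$, I write $\lambda_{1\boxplus\widetilde\phi}=1\star A(1,\cdot)$ and treat the two parts separately. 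The part $\sum_{\ell m}\Kl_4(a\ell m;q)$ is a type-I sum, handled by completion using the explicit stationary-phase evaluation of $\Kl_4$ modulo $p^k$, $k\ge2$. The requirement $p\neq3$ (and also $p\neq 2$) is presumably needed here, so that the critical-point equation $x^4\equiv am$ behaves well. In effect this reduces $\Kl_4$ to sums of additive characters of $4(am)^{1/4}/q$.

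The main term is the $\GL_3$ correlation sum $\sum_m A(1,m)\Kl_4(am;q)W(m/M)$ with $M\approx q^4/X$, just beyond the range where the trivial bound suffices. For prime moduli \cite{KLMS} bound this with a delta-method argument. Here I instead exploit the $p$-adic structure. Modulo $p^k$, $\Kl_4(am;q)$ is essentially $q^{1/2}$ times a sum over the $4$-th roots of $am$ of $e\bigl(4\xi/q\bigr)$ with $\xi^4\equiv am$, which is a $p$-adically analytic phase. Using this, I will apply a DFI delta method with conductor-lowering (or a $q$-analogue of van der Corput: shift $m\mapsto m+p^{\kappa}h$ and Taylor-expand the phase $p$-adically). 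After $\GL_3$ Voronoi, Cauchy--Schwarz, and Poisson summation, this produces character sums modulo $p^{k-\kappa}$ with explicit square-root cancellation via $p$-adic stationary phase. I expect a saving of the shape $q^{-\eta}$ with $\eta$ determined by optimising the delta-method parameters. Tracking exponents should then give the admissible range $q\le X^{25/61-\varepsilon}$.

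I expect the main obstacle to be this last step: getting a power saving for $\sum A(1,m)\Kl_4(am;p^k)$ in the range $M\approx q^{4}/X$ with $q=X^{25/61}$, uniformly in $p$ and $k$. The delicate points are small $k$ (e.g.\ $k=2,3$), where $p$-adic expansions give little room, and the degenerate stationary points. The $p^{3/4}$ factor in the statement suggests that for small $k$ one simply absorbs a loss of a power of $p$, and in the final exponent optimisation I would expect the loss from the worst case, $k$ small, to be at most $p^{3/4}$.
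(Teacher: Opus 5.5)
There is a genuine gap in how you handle the convolution $\lambda_{1\boxplus\overline{\phi}}=1\star A(1,\cdot)$. The dual sum does not split into a ``type-I part'' and a ``$\GL_3$ part''. After dyadic decomposition it is the bilinear sum $\sum_{\ell,m}A(1,m)\Kl_4(a\ell m;q)V_1(\ell/L)V_2(m/M)$ with prefactor $X/q^{5/2}$, and it must be bounded for every pair with $q^{3/2-\varepsilon}\le LM\ll q^4/X$. You have only two tools for it:
\begin{enumerate}
\item Completing in $\ell$ and summing over $m$ trivially gives $\frac{X}{q}\cdot\frac{M}{q}$, so it needs $M\le q^{1-\varepsilon}$.
\item Applying the $\GL_3$ correlation bound for each fixed $\ell$ and summing trivially over $\ell$ gives the condition $q\le X^{5/12-\varepsilon}L^{-5/36}$, using the bound of Theorem~\ref{Bound theorem}.
\end{enumerate}
At $q=X^{25/61}$, where $LM$ can reach $X^{39/61}$, these cover only $L\le X^{3/61}$ and $L\ge X^{14/61}$. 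In general the two ranges meet only when $q\le X^{20/51}$, so on their own they do not even beat the trivial exponent $2/5$. Your plan says nothing about the intermediate range, where both $\ell$ and $m$ have moderate length.

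The missing ingredient is a type-II estimate. The paper proceeds as follows:
\begin{enumerate}
\item Apply Cauchy--Schwarz with $m$ outside, which removes $A(1,m)$ via Rankin--Selberg.
\item Open the square in $\ell_1,\ell_2$ and apply Poisson summation in $m$ modulo $q$.
\item Bound the resulting correlation sums of $\Kl_3$ modulo $p^k$ by Lemma~\ref{2.5}. The zero frequency vanishes unless $\ell_1\equiv\ell_2\bmod p^{\lfloor k/2\rfloor}$, and each non-zero frequency is $\ll p^{\min\{\nu_p(m),\lceil k/2\rceil\}}$.
\end{enumerate}
This saves as soon as $q\le X^{2/5-\varepsilon}L^{1/5}$, for $L\le q^{3/5+\varepsilon}$. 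The exponent $25/61$ is exactly the balance point $X^{5/12}L_0^{-5/36}=X^{2/5}L_0^{1/5}$ at $L_0=X^{3/61}$. It is not something ``tracking exponents'' in the $\GL_3$ step alone can produce; that step with $L=1$ would give $5/12$.

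For the same reason you cannot leave the $\GL_3$ bound unspecified. Your sketch is in the spirit of the paper: DFI with conductor lowering by $p^\lambda$, $\GL_3$ Voronoi, Poisson, Cauchy--Schwarz, and $p$-adic stationary phase. But the concrete output $S(N)\ll p^{3/4}N^{3/4+\varepsilon}q^{3/10}+N^{1/2+\varepsilon}q^{13/20}$, obtained with $\lambda=\lfloor 2k/5\rfloor$, is exactly what produces the $L^{-5/36}$ condition.

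Smaller points:
\begin{itemize}
\item The dual prefactor is $X/q^{5/2}$. With your $X/q^{2}$ the trivial bound would give only $\theta=1/3$, contradicting your own remark that Deligne gives $2/5$.
\item The factor $p^{3/4}$ comes neither from imprimitive characters nor from small $k$. It arises in the $\GL_3$ bound: for odd $\lambda$ the character sum $\mathfrak{C}_2$ loses an extra $p^{3/2}$, which becomes $p^{3/4}$ after Cauchy--Schwarz.
\item Evaluating $\Kl_4$ through fourth roots would force $p\neq 2$, which is not in the hypotheses. The paper instead writes $\Kl_4$ as an additive twist of $\Kl_3$ and uses the cube-root evaluation of Lemma~\ref{lem:Kloosterman1}, so only $p\neq 3$ is needed.
\item Your treatment of imprimitive characters via the smaller modulus is reasonable. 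The paper passes over this point by citing the prime-modulus argument.
\end{itemize}
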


\begin{remark}
Unlike \eqref{qbound}, the exponent in \eqref{qbound2}
is independent of any approximation to the Ramanujan--Petersson
conjecture for $\GL_{3}$ automorphic representations.
\end{remark}

In order to prove Theorem \ref{main-theorem2}, we will use a power
saving bound for the analytic twisted sum of $\GL_3$ Fourier coefficients.
Define
$$
S(N):=\sum_{m \geq 1}A(1, m)\Kl_4(m\ell; q)V\left(\frac{m}{N}\right),
$$
where $\ell \in \mathbb{Z}$, and $q=p^k$ with $p \ne 3$ and
$k\geq 2$.

\begin{theorem}\label{Bound theorem}
With notation as above, we have
\begin{equation*}
 S(N)\ll p^{3/4}N^{3/4+\varepsilon}q^{3/10}+N^{1/2+\varepsilon}q^{13/20}.
\end{equation*}
\end{theorem}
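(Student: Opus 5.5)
Our plan for Theorem~\ref{Bound theorem} follows the delta-method strategy used for $\GL_3$ twists by trace functions, adapted to prime-power moduli. For such moduli the hyper-Kloosterman sum $\Kl_4(m\ell;q)$ with $q=p^k$, $k\geq 2$, can be evaluated explicitly by $p$-adic stationary phase; we assume $p\neq 3$ is needed here to solve the critical point equation. The outcome is that $\Kl_4(m\ell;q)$ is, up to bounded factors, a short sum (over the at most $4$ roots of a quartic congruence) of terms
\[
e\Bigl(\frac{4x}{q}\Bigr), \qquad x^4\equiv m\ell \bmod q,
\]
together with a Gauss-sum-type normalization. This lets us treat $K(m)=\Kl_4(m\ell;q)$ as an additive character with a "$p$-adically analytic" phase $m\mapsto 4(m\ell)^{1/4}/q$. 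The plan is to Taylor expand this phase along residue classes modulo $p^{\lfloor k/2\rfloor}$ (or a suitable divisor $q_1\mid q$). On each class $m=m_0+q_1 r$ the phase becomes polynomial in $r$ modulo $q/q_1$. The loss from incomplete control at the top $p$-adic level is what we expect to produce the factor $p^{3/4}$.

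Next we separate the oscillations of $A(1,m)$ and of $K(m)$ with a conductor-lowering delta symbol (the DFI or Kloosterman circle method, with the congruence modulo $q$ detected separately). Write $S(N)=\sum_{m,n}A(1,m)K(n)\delta(m=n)V(m/N)$, and detect $m=n$ by the congruence $m\equiv n \bmod q_1$ together with a delta method of modulus of size $Q\approx (N/(q_1 L))^{1/2}$. Here $L$ is a parameter chosen to optimize, playing the role of conductor dropping. Then:
\begin{enumerate}
\item Apply $\GL_3$ Voronoi summation to the $m$-sum. This gives a dual sum of length about $Q^3 q_1^3/N$ with hyper-Kloosterman-type character sums.
\item Apply Poisson summation to the $n$-sum against $K(n)$. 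Here the explicit $p$-adic form of $K$ produces complete sums modulo $q$ that can be evaluated by stationary phase again.
\end{enumerate}
Square-root cancellation in these complete sums is again obtained from the explicit evaluation, rather than from Deligne.

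We then apply Cauchy--Schwarz to remove the $\GL_3$ coefficients, using the Ramanujan-on-average bound $\sum_{n\leq x}|A(n,1)|^2\ll x^{1+\varepsilon}$. This is followed by Poisson summation in the remaining variable. The resulting diagonal and off-diagonal terms lead to a bound of the shape
\[
N^{3/4}q^{3/10}p^{3/4}+N^{1/2}q^{13/20}
\]
after choosing $L$ (and $q_1$) optimally. The second term arises from the range where the dual sums are short and the trivial bound dominates.

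The main obstacle is the character-sum analysis after Cauchy--Schwarz. This requires evaluating sums of the shape
\[
\sum_{u \bmod q} \overline{\mathcal C(u,n_1)}\,\mathcal C(u,n_2)\,e\Bigl(\frac{u h}{q}\Bigr),
\]
where $\mathcal C$ is the product of Voronoi-side Kloosterman sums and the Poisson-side transform of $\Kl_4$. We must show square-root cancellation for $h\neq 0$ and a strong saving in the diagonal $n_1=n_2$. For prime powers we would do this by iterated $p$-adic stationary phase (the method of Postnikov and Milićević). The worst contribution comes from degenerate critical points, where the Hessian vanishes modulo $p$. We expect this to cost exactly the $p^{3/4}$ loss, and this is the step we would verify most carefully.
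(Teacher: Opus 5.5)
Your outline has the same skeleton as the paper's argument: DFI with conductor lowering via $p^\lambda\mid n-m$ and $C=\sqrt{N/p^\lambda}$, $\GL_3$ Voronoi modulo $cp^\lambda$, Poisson on the $\Kl_4$-sum, Cauchy--Schwarz, Poisson in $n_2$, and elementary $p$-adic evaluation of the complete sums. However, the step that actually produces the bound is only announced, and where you commit to specifics they are off.

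\emph{The $p$-adic input.} You never need $\Kl_4$ in closed form. Moreover, the fourth-root evaluation you propose is nondegenerate only when $p\nmid 4$, since the Hessian at the critical point of $\Kl_d$ has determinant $d/x^{d-1}$. So it does not apply at $p=2$, which the theorem allows, and $p\neq 3$ is irrelevant to it. What Poisson in $m$ modulo $cp^k$ actually yields is $cp^{k/2}\Kl_3(c\ell\overline{(m+up^{k-\lambda})};q)\,\delta_{m\equiv -up^{k-\lambda}\bmod c}$. The hypothesis $p\neq 3$ enters through the cube-root formula for $\Kl_3$ modulo $p^k$.

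\emph{The missing core.} The heart of the proof is the $p$-part $\mathfrak{C}_2(n_2)$ of the character sum after Cauchy--Schwarz. One splits $u_i=p^{\lambda/2}\alpha_i+\beta_i$ and expands $(s_i+t_iu_ip^{k-\lambda})^{-1/3}$, which is linear in $\alpha_i$ only because $\lambda\le 2k/3$. One then sums over $\alpha_i$ and counts solutions of three congruences modulo $p^{\lambda/2}$. This gives $\mathfrak{C}_2(0)\ll p^{3\lambda}$, vanishing unless $m_1^4c_2\widehat{c_2}^6\equiv m_2^4c_1\widehat{c_1}^6\bmod p^{\lfloor\lambda/2\rfloor}$, and $\mathfrak{C}_2(n_2)\ll p^{\lceil 5\lambda/2\rceil+\min\{\nu_p(n_2),\lceil\lambda/2\rceil\}}$ for $n_2\neq 0$. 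Nothing in your sketch replaces these estimates, and without them the exponents $3/10$ and $13/20$ are not determined.

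\emph{Your heuristics point the wrong way.} No degenerate critical points occur for $p\neq 3$. The Hensel loss $p^{\min\{\nu_p(n_2),\lceil\lambda/2\rceil\}}$ averages to $N^{\varepsilon}$ over $n_2$. The factor $p^{3/4}$ is the square root of the extra $p^{3/2}$ in $\mathfrak{C}_2$ when $\lambda$ is odd (one must split at $p^{(\lambda+1)/2}$), and it enters through the off-diagonal term $\mathbf{\Omega}_{\neq 0}$. The term $N^{1/2}q^{13/20}$ does not come from trivially bounding short dual sums. It is the $n_2=0$ contribution of pairs $m_1,m_2$ divisible by a power of $p$ high enough that the quartic congruence is automatic, i.e.\ the term $p^{2k-\lambda/4}/C^3$ in $\mathbf{\Omega}_0$. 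Altogether one gets $N^{3/4}p^{3\lambda/4}+N^{1/2}p^{k/2+3\lambda/8}+p^{3/4}N^{3/4}p^{k/2-\lambda/2}$, balanced at $\lambda=\lfloor 2k/5\rfloor$; your extra parameter $L$ plays no role. As written, the proposal asserts the final bound rather than deriving it.
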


\begin{remark}
Sun and Zhao \cite{SZ} obtained subconvexity bounds for $L(\phi \times \chi, 1/2)$, where $\chi$ is a primitive Dirichlet character of prime power
conductor $q=p^k$. By the approximate functional equation, they need to consider
$$
\sum_{m \geq 1}A(1, m)\chi(m)V\left(\frac{m}{N}\right).
$$
This kind of situation is simpler, because one can take advantage
of the fact that the Dirichlet character is multiplicative.
\end{remark}

\begin{notation}
Throughout the paper, the letters
$\varepsilon$ and $A$ denote arbitrarily small and arbitrarily large positive
real numbers, respectively, not necessarily the same at each occurrence.
As usual, $e(z) = e^{2 \pi i z}$.
\end{notation}

\begin{acknowledgments}
The author thanks Professor Jianya Liu for his help and encouragement, 
and gratefully acknowledges the many helpful suggestions from Professor 
Yongxiao Lin during the preparation of this paper.
\end{acknowledgments}

\section{Preliminaries}
\label{prelim}
In this section, we present some essential information and tools needed later. Firstly, we briefly
review some basic facts of automorphic $L$-functions on $\GL_3$.

\subsection{Automorphic $L$-functions}\label{2.1}

Let $\phi$ be a Hecke--Maass cusp form of
type $\nu=(\nu_1,\nu_2)$ for $\operatorname{SL}_3(\mathbb{Z})$, which is an
eigenfunction for all the Hecke operators.
Let the Fourier coefficients be $A(n_1,n_2)$,
normalized so that $A(1, 1)=1$.

By Rankin--Selberg theory, we have
\begin{align}\label{GL3-Rankin--Selberg}
\mathop{\sum\sum}_{n_1^2n_2\leq N} \left|A(n_1,n_2)\right|^2 \ll N.
\end{align}
We denote  $\kappa=0$ if $\chi(-1)=1$ and $\kappa=1$ if $\chi(-1)=-1$ and denote
 $$\tau_\chi=q^{-1/2}\sum_{x\in \mathbb{F}_q}\chi(x)e\Big(\frac{x}q\Big)$$
the normalized Gauss sum.
The $L$-function
$$L((1\boxplus\phi)\times \chi,s)=\sum_{n\geq 1}
\frac{\lambda_{1\boxplus\phi}(n)\chi(n)}{n^s}
=L(\chi,s)L(\pi\times \chi,s)
$$
has analytic continuation to $\mathbb{C}$
and satisfies a functional equation of the form:
\begin{equation*}
\Lambda((1\boxplus\phi)\times \chi,s)=\tau_\chi^4\Lambda((1\boxplus\phi)
\times \overline{\chi},1-s)
\end{equation*}
where
$$\Lambda((1\boxplus\phi)\times \chi,s)=
q^{2s}L_\infty(1\boxplus\phi,s+\kappa)L((1\boxplus\phi)\times \chi,s)
$$
is the completed $L$-function and
 $$
 L_\infty(1\boxplus\phi,s)=\prod_{i=1}^4\Gamma_\mathbb{R}
 (s-\iota_{i}),\ \Gamma_\mathbb{R}(s)=\pi^{-s/2}\Gamma(s/2)$$
with
$$\{\iota_{i},\ i=1,2,3,4\}=\{0,\nu_2-\nu_1,2\nu_1+\nu_2-1,1-\nu_1-2\nu_2\}$$
denoting the local Archimedean factor of $1\boxplus\phi$.
For more details, we refer the readers to Iwaniec--Kowalski \cite[Section 5.1]{IK}.

\subsection{Summation formulas}

We first recall the Poisson summation formulae over
an arithmetic progression.
\begin{lemma}\label{Fourier transform}
Let $\beta \in \mathbb{Z}$ and $c \in \mathbb{Z}_{\geq 1}$. For a Schwartz function $w: \mathbb{R} \to \mathbb{C}$, we have
\begin{align*}
\mathop{\sum}_{\substack{ n\in \mathbb{Z} \\ n \equiv \beta \bmod c}} w(n)=\frac{1}{c}
\sum_{n\in\mathbb{Z}}\widehat{w}\left( \frac{n}{c}\right)e\left(\frac{n\beta}{c}\right),
\end{align*}
where $$\widehat{w}(y)=\int_{\mathbb{R}}w(x)e(-xy)\mathrm{d}x$$ is the Fourier transform of $w(x)$.
\begin{proof}
See e.g.\cite[Eq. (4.24)]{IK}.
\end{proof}
\end{lemma}
In our work we crucially require the $\GL_3$ Voronoi summation formula proved first by Miller and Schmid \cite{MS2}, which we now state.

Denote the Langlands parameters of $\phi$ by
$$\mu_1=-\nu_1-2\nu_2+1, \quad \mu_2=-\nu_1+\nu_2,\quad \mu_3=2\nu_1+\nu_2-1.$$
We define
\begin{align}\label{Gamma1}
\gamma_{\pm} (s)=\frac{1}{2\pi^{3(s+1/2)}}\left\{\prod_{j=1}^3
\frac{\Gamma\left(\frac{1+s+\mu_j}{2}\right)}
{\Gamma\left(\frac{-s-\mu_j}{2}\right)}\mp
i \prod_{j=1}^3
\frac{\Gamma\left(\frac{2+s+\mu_j}{2}\right)}
{\Gamma\left(\frac{1-s-\mu_j}{2}\right)}\right\}.
\end{align}
For $w(x)\in C_c^\infty(0,\infty)$ we
denote by $\widetilde{w}(s)$
the Mellin transform of $w(x)$.
Let
\begin{align}\label{intgeral transform-3}
\mathcal{W}^{\pm}\left(x\right)
=\frac{1}{2\pi i}\int_{(\sigma)}x^{-s}
\gamma_{\pm}(s)\widetilde{w}(-s)\mathrm{d}s,
\end{align}
where $\sigma>\max\limits_{1\leq j\leq 3}\{-1-\mathrm{Re}(\mu_j)\}$.
Then we have the following Voronoi summation formula.

\begin{lemma}\label{voronoiGL3}
Let $c\in \mathbb{N}$ and $u\in \mathbb{Z}$ be such
that $(u,c)=1$. Then
\begin{equation*}
\sum_{n=1}^{\infty}A\left(1,n\right)
e\left(\frac{un}{c}\right)w\left(n\right)
=c\sum_{\pm}\sum_{n_{1}|c}
\sum_{n_{2}=1}^{\infty}\frac{A\left(n_{2},n_{1}\right)}{n_{1}n_{2}}
S\left(\overline{u},\pm n_{2};\frac{c}{n_{1}}\right)
\mathcal{W}^{\pm}\left(\frac{n_{1}^{2}n_{2}}{c^{3}}\right),
\end{equation*}
where $u \overline{u} \equiv 1(\bmod\,c)$ and $S(m,n;c)$ is the classical Kloosterman sum.
\end{lemma}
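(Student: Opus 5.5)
The statement immediately above is Lemma \ref{voronoiGL3}, the $\GL_3$ Voronoi summation formula of Miller--Schmid, and I would not reprove it from scratch. The plan is to derive it from the functional equations of the twisted $L$-functions $L(\phi\times\chi,s)$, or equivalently of the additively twisted Dirichlet series $\sum_n A(1,n)e(un/c)n^{-s}$. Concretely, I would first write
$$\sum_{n}A(1,n)e\Bigl(\frac{un}{c}\Bigr)w(n)=\frac{1}{2\pi i}\int_{(2)}\widetilde{w}(s)\,L\Bigl(s,\frac{u}{c}\Bigr)\,\mathrm{d}s,
\qquad L\Bigl(s,\frac{u}{c}\Bigr)=\sum_{n}\frac{A(1,n)e(un/c)}{n^s}.$$
Next I would expand $e(un/c)$ according to $d=(n,c)$, setting $n=dn'$. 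After that I decompose the additive character modulo $c/d$ into primitive Dirichlet characters via Gauss sums. This expresses $L(s,u/c)$ as a finite combination of $L(\phi\times\chi,s)$ for characters $\chi$ of conductor dividing $c$. These carry the Hecke-relation corrections that come from the multiplicativity $A(1,dn')=\sum A(\cdot,\cdot)$, and this is where the coefficients $A(n_2,n_1)$ with $n_1\mid c$ eventually appear.

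Second, I would apply the $\GL_3$ functional equation for each $L(\phi\times\chi,s)$. It has root number $\tau_\chi^3$, conductor $q_\chi^3$, and gamma factors $\prod_j\Gamma_{\mathbb R}(s+\kappa-\mu_j)$. I would then shift the contour to $\Re s=\sigma<0$; since $\phi$ is cuspidal and $\widetilde{w}$ is entire, no poles arise. The ratio of gamma factors, split according to the parity $\kappa$, combines into exactly the two expressions $\gamma_\pm(s)$ in \eqref{Gamma1}. The $\mp i$ combination comes from writing $e(x)=\cos+i\sin$, that is, from separating even and odd characters. Re-expanding the dual side as a Dirichlet series in $n_2$ and inverting the Mellin transform then produces $\mathcal W^\pm(n_1^2n_2/c^3)$.

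Third, and this is the main obstacle, the arithmetic bookkeeping must be carried out. I need to recombine the Gauss sums $\tau_\chi^3$, weighted by $\chi(u)$, back into additive form. Summing over $\chi$ and applying orthogonality should turn the triple Gauss-sum product into the Kloosterman sum $S(\overline u,\pm n_2;c/n_1)$. At the same time, the imprimitive parts and the Hecke relations must assemble into the sum over $n_1\mid c$ with weight $A(n_2,n_1)/(n_1n_2)$ and the overall factor $c$. Handling non-squarefree $c$ here is delicate. This is precisely the content of Miller--Schmid \cite{MS2} (see also Goldfeld--Li), so in the paper I would simply cite it, and this lemma needs no independent proof.
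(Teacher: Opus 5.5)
This matches the paper: Lemma~\ref{voronoiGL3} is not proved there but simply quoted from Miller--Schmid, which is exactly where you land. Your sketch via functional equations of the twisted $L$-functions $L(\phi\times\chi,s)$ is an alternative (Goldfeld--Li style) derivation that the paper does not need. You correctly identify the genuinely hard part of that route as recombining everything into $S(\overline{u},\pm n_2;c/n_1)$ with the $n_1\mid c$ sum when $c$ is not squarefree.
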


\subsection{The delta method}

Our method is based on separating two oscillatory coefficients using the $\delta$-method. In the present situation we will use a version of the
circle method by Duke, Friedlander and Iwaniec (see \cite[Chapter 20]{IK}).

For any $n\in \mathbb{Z}$ and $C\in \mathbb{R}^+$, we have
\begin{align}\label{DFI's}
\delta_{n=0}=\frac{1}{C}\sum_{1\leq c\leq C} \;\frac{1}{c}\;\sideset{}{^*}\sum_{u\bmod{c}}
e\left(\frac{nu}{c}\right)\int_\mathbb{R}g(c,\zeta) e\left(\frac{n\zeta}{cC}\right)\mathrm{d}\zeta,
\end{align}
where the $*$ on the sum indicates
that the sum over $u$ is restricted to $(u,c)=1$.
The function $g$ has the following properties
(see (20.158) and (20.159) of~\cite{IK} and~\cite[Lemma 15]{huang})
\begin{align}\label{g-h}
g(c,\zeta)\ll |\zeta|^{-A},\quad g(c,\zeta) = 1 +
O\left(\frac{C}{c}\left(\frac{c}{C}+|\zeta|\right)^A\right)
\end{align}
for any $A>1$ and
\begin{align}\label{g rapid decay}
\frac{\partial^j}{\partial \zeta^j}g(c,\zeta)\ll
|\zeta|^{-j}\min\left(|\zeta|^{-1},\frac{C}{c}\right)\log C, \quad j\geq 1.
\end{align}
In particular the first property in \eqref{g-h} implies that
the effective range of the integration in
\eqref{DFI's} is $[-C^\varepsilon, C^\varepsilon]$.

\subsection{Evaluations of Kloosterman sums}

We now turn to Kloosterman sums, and the following lemma characterizes when such sums can vanish.
\begin{lemma}\label{lem:Kloosterman=0-initial}
For a prime $p$ and an integer $k \geq 2$, let $q=p^k$.
For any $m\in \mathbb{Z}$ with $p\mid m$ and $(a,p)=1$, we have
$$
\Kl_4(am;q)=0.
$$
\end{lemma}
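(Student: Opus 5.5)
The plan is to sum out one variable in the definition of $\Kl_4(am;q)$ and detect the resulting vanishing by a scaling substitution. Write $a'=am$, so $p\mid a'$. By definition,
$$
q^{3/2}\Kl_4(a';q)=\sum_{\substack{x_1,\dots,x_4\in\mathbb{Z}/q\mathbb{Z}\\ x_1x_2x_3x_4=a'}}e\Bigl(\frac{x_1+x_2+x_3+x_4}{q}\Bigr),
$$
where, as usual for hyper-Kloosterman sums, the variables $x_i$ are understood to be units (this is the standard convention, and it is what makes the sum a genuine Kloosterman sum). The first step is to record the consequence of this convention: if $x_i\in(\mathbb{Z}/q\mathbb{Z})^\times$ for every $i$, then the product $x_1x_2x_3x_4$ is a unit. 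The constraint $x_1x_2x_3x_4\equiv a' \bmod q$ then has no solutions, because $p\mid a'$ and $k\geq 1$. The sum is therefore empty and vanishes identically. In this reading the lemma is immediate, and the hypothesis $(a,p)=1$ plays no role beyond fixing notation.

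If instead the definition is read literally, with $x_i$ ranging over all of $\mathbb{Z}/q\mathbb{Z}$, I would argue with a scaling trick that uses $k\geq2$. First eliminate $x_4$ wherever possible, then exploit the invariance under $x_i\mapsto x_i(1+p^{k-1}t)$ for $t \bmod p$. Concretely, suppose some variable, say $x_1$, is a unit. Substitute $x_1\to x_1(1+p^{k-1}t)$ and $x_2\to x_2(1+p^{k-1}t)^{-1}$. This preserves the product and changes the phase by $e\bigl(t(x_1-x_2)/p\bigr)$. Averaging over $t$ then forces $x_1\equiv x_2 \bmod p$. Repeating this over all pairs of unit variables forces every unit variable to be congruent to the others mod $p$.

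This line gives constraints but not vanishing, so it is the step I expect to be the main obstacle. In fact the literal reading is doubtful, since terms with non-unit $x_i$ do not cancel in general. That is why I would commit to the standard unit convention, under which the proof is the one-line emptiness argument above.

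So the proof I would write is this. Under the convention $x_i\in(\mathbb{Z}/q\mathbb{Z})^\times$, the product $x_1\cdots x_4$ is a unit mod $q$ while $am\equiv 0 \bmod p$. No quadruple satisfies the defining congruence, so $\Kl_4(am;q)=0$.
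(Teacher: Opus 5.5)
Your proof establishes the lemma only under a convention that makes it vacuous, and that is not the convention the paper uses. The paper's definition allows $x_i\in\mathbb{Z}/q\mathbb{Z}$. Both its proof and its later uses of the lemma work with
\[
\Kl_4(b;q)=\frac{1}{q^{1/2}}\,\sideset{}{^*}\sum_{x\bmod q}e\Bigl(\frac{b\overline{x}}{q}\Bigr)\Kl_3(x;q)
=\frac{1}{q^{3/2}}\,\sideset{}{^*}\sum_{x_1,x_2,x_3\bmod q}e\Bigl(\frac{x_1+x_2+x_3+b\,\overline{x_1x_2x_3}}{q}\Bigr).
\]
Those later uses are the reduction to $(\ell,p)=1$ and the evaluation of complete sums $\sum_{\beta\bmod cp^k}\Kl_4(\beta\ell;q)e(\cdot)$ over all residues $\beta$. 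In this formula $x_4=b\,\overline{x_1x_2x_3}$ need not be a unit. For $p\mid b$ the sum is therefore not empty, and the content of the lemma is that it nevertheless vanishes, so that it agrees with your all-units version. The warning sign is that your argument never uses $k\geq 2$; you note yourself that $k\geq 1$ suffices. Under the intended definition the statement fails for $k=1$: $\Kl_4(0;p)=p^{-3/2}\bigl(\sum_{x=1}^{p-1}e(x/p)\bigr)^3=-p^{-3/2}$.

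The missing idea is an additive shift of a single unit variable, not your product-preserving multiplicative shift of a pair. Replace $x_1$ by $x_1+p^{k-1}t$ with $t\bmod p$. Since $k\geq 2$, this is still a unit and $\overline{x_1+p^{k-1}t}\equiv\overline{x_1}\bmod p^{k-1}$. Because $p\mid am$, the term $am\,\overline{x_1x_2x_3}$ is then unchanged modulo $q$, while $x_1/q$ moves by $t/p$. Averaging over $t$ produces the factor $\sum_{t\bmod p}e(t/p)=0$. The product only has to be preserved modulo $q$, and $p\mid am$ gives that for free.

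This is the mechanism of the paper's proof. It writes the $\Kl_3$-variable as $y+p^{k-1}x'$ and notes that $e(am\overline{x}/p^k)$ does not depend on $x'$ since $p\mid m$. It is then left with a vanishing sum over $x'\bmod p$.

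The same device also handles the fully literal reading, with all $x_i\in\mathbb{Z}/q\mathbb{Z}$ unrestricted. If $q\nmid am$, apply the shift to the first coordinate of minimal $p$-adic valuation; that valuation is at most $k-2$, and the other three coordinates have product divisible by $p$. If $q\mid am$, compare with the full sum over $(\mathbb{Z}/q\mathbb{Z})^4$, which is $0$. The tuples with $p\mid\prod x_i$ but $q\nmid\prod x_i$ contribute $0$ by the previous case. The all-unit tuples contribute $\bigl(\sum_{(x,p)=1}e(x/q)\bigr)^4=\mu(p^k)^4=0$. So your assertion that the non-unit terms do not cancel there is unfounded.
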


\begin{proof}
Noting
$$
\Kl_4(am;p^k)=\frac{1}{p^{k/2}}\,\,\sideset{}{^*}
\sum_{x\bmod p^k}e\left(\frac{am\overline{x}}{p^k}\right)\Kl_3(x; p^k),
$$
we have
$$
\Kl_4(am;p^k)=\frac{1}{p^{k/2}}\,\,\sideset{}{^*}\sum\limits_{{y \bmod p^{k-1}}} \sum_{x \bmod p} e\left(\frac{am\,\overline{y + p^{k-1} x}}{p^k}\right)\Kl_3(y + p^{k-1} x; p^k).
$$
Summing over $x$, we get
$$
\Kl_4(am;p^k)=\frac{1}{p^{k/2}}\,\,\sideset{}{^*}\sum\limits_{{y \bmod p^{k-1}}} e\left(\frac{am y^{-1}+y}{p^k}\right)\Kl_3(y ; p^k) \sum_{x \bmod p} e\left(\frac{x}{p}\right)
$$
since $p\mid m$. The inner sum over $x$ now vanishes due to orthogonality of additive characters. This completes the proof.
\end{proof}
We use the following evaluation of the normalised hyper-Kloosterman sums modulo prime powers,
which can be found in~\cite[Eq. (1.16)]{DB}.
\begin{lemma}\label{lem:Kloosterman1}
Let $q=p^k$ with a prime $p\neq 3$ and $k\geq 2$. Then for $a\in \mathbb{Z}_p^{\times}$, we have
\begin{equation*}
\Kl_3(a;q)=\left(\frac{p^k}{3}\right)
\sum_{\substack{r \in \mathbb{Z}_p \\ r^3= a}}e\left(\frac{3r}{p^k}\right),
\end{equation*}
where $\left(\frac{p^k}{3}\right)$ is the Jacobi symbol.
\end{lemma}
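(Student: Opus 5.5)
The plan is to prove Lemma~\ref{lem:Kloosterman1} by $p$-adic stationary phase, that is, by splitting the variables into a coarse residue plus a fine perturbation. First write the sum in two free variables:
$$
\Kl_3(a;q)=\frac1q\,\sideset{}{^*}\sum_{x,y \bmod q} e\Bigl(\frac{f(x,y)}{q}\Bigr),\qquad f(x,y)=x+y+a\overline{xy}.
$$
Set $m=\lceil k/2\rceil$ and parametrise $x=x_0+p^{m}u$, $y=y_0+p^{m}v$, with $x_0,y_0$ units modulo $p^{m}$ and $u,v \bmod p^{k-m}$. Taylor-expand $f$ $p$-adically around $(x_0,y_0)$; the expansion of $\overline{x}=\overline{x_0}(1+p^m u\overline{x_0})^{-1}$ is a geometric series, so it is exact modulo $p^k$.

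\emph{Even case $k=2m$.} Here $p^{2m}\equiv 0 \bmod q$, so only the linear term survives. The sum over $(u,v)\bmod p^m$ then forces $\nabla f(x_0,y_0)\equiv 0 \bmod p^m$, and contributes a factor $p^{2m}=q$, which cancels the normalisation $1/q$. The critical-point equations are
$$
1\equiv a\overline{x_0^2y_0},\qquad 1\equiv a\overline{x_0y_0^2}\pmod{p^m}.
$$
Dividing one by the other gives $x_0\equiv y_0$ and then $x_0^3\equiv a$. Since $p\neq 3$, the map $r\mapsto r^3$ is étale on $\mathbb{Z}_p^\times$. By Hensel's lemma, the solutions modulo $p^m$ therefore correspond bijectively to the roots $r\in\mathbb{Z}_p$ of $r^3=a$, of which there are $0$, $1$ or $3$. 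One has to check that the phase $f(x_0,y_0)$ is well defined modulo $p^k$ when $x_0$ is only known modulo $p^m$. This holds precisely because $\nabla f$ vanishes there, and so one may take $x_0=y_0=r$. This gives $f=3r$ and hence $\Kl_3(a;q)=\sum_{r^3=a}e(3r/q)$, which matches the claim since $(p^k/3)=1$ for $k$ even.

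\emph{Odd case $k=2m-1$, $k\geq 3$.} Use instead the step $p^{m-1}$, with $u,v \bmod p^{m}$. Then $3(m-1)\geq k$, so the cubic terms vanish modulo $p^k$. The linear term again localises $(x_0,y_0)$ to a lift of a critical point, but now only modulo $p^{m-1}$. A residual quadratic term $p^{2m-2}Q(u,v)/q=Q(u,v)/p$ survives, where $Q$ is the Hessian form divided by $2$. At $x=y=r$, $a=r^3$, one computes $f_{xx}=f_{yy}=2/r$ and $f_{xy}=1/r$. Hence $Q=\overline{r}(u^2+uv+v^2)$, and the factor $1/2$ disappears, so $p=2$ causes no trouble. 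The remaining sum is a two-dimensional quadratic Gauss sum modulo $p$. It equals $p\cdot\bigl(\tfrac{-3}{p}\bigr)$, because the discriminant of $u^2+uv+v^2$ is $-3$ and a binary form's Gauss sum is $p$ times the Legendre symbol of $-\mathrm{disc}$. Equivalently, use $\varepsilon_p^2=(\tfrac{-1}{p})$ together with $\det=3/r^2$; the unit $\overline r$ drops out since it contributes a square. For $p=2$ one checks directly that $\sum_{u,v\bmod 2}(-1)^{u^2+uv+v^2}=-2=2\cdot(\tfrac{2}{3})$. By quadratic reciprocity $(\tfrac{-3}{p})=(\tfrac{p}{3})=(\tfrac{p^k}{3})$ for $k$ odd. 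Collecting the powers of $p$ against $1/q$ gives exactly one unit-size term $e(3r/q)$ per root, and the formula follows.

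The main obstacle is the bookkeeping in the odd case. There one must verify that, after fixing $(x_0,y_0)$ only modulo $p^{m-1}$, the linear-term condition really selects one class per cube root $r$ and that the phase is independent of the chosen representative. The cleanest route is to recentre at the exact $p$-adic point $(r,r)$ from the outset: write $x=r(1+p^{m-1}u)$ and $y=r(1+p^{m-1}v)$ near each root, and show that the points not near any $(r,r)$ cancel via the linear-term orthogonality. The hypothesis $p\neq 3$ enters twice, in the Hensel lifting of cube roots and in the non-degeneracy of the Hessian ($\det\neq 0 \bmod p$).
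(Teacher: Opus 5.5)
Your proposal is correct. It differs from the paper only in that the paper gives no argument at all: it quotes the evaluation from \cite[Eq.~(1.16)]{DB}.

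Your $p$-adic stationary phase argument checks out in both parities. For $k=2m$, the $(u,v)$-sum gives exactly $q$ times the indicator of $x_0\equiv y_0$, $x_0^3\equiv a \bmod p^m$. The phase at $(r,r)$ is $2r+ar^{-2}=3r$.

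For $k=2m-1\geq 3$, recentring at the exact point $(r,r)$ makes the linear term vanish identically. This settles the representative issue you flag. The non-critical classes modulo $p^{m-1}$ vanish by orthogonality in the upper $p$-adic digits of $u,v$. The geometric series gives the integral quadratic term $p^{2m-2}\,\overline{r}(u^2+uv+v^2)$. The normalisation is $p^{2m-2}G/q=G/p$, with $G=p\left(\frac{-3}{p}\right)$ for odd $p$ and $G=-2$ for $p=2$, and this equals $\left(\frac{p^k}{3}\right)$. As a sanity check, one verifies directly that $\Kl_3(1;8)=e(-1/8)=-e(3/8)$.

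What your route buys over the citation is a self-contained proof. It explains the Jacobi symbol as the normalised Gauss sum of the Hessian form, which is present only for odd $k$. It also uses the same digit-splitting and cube-root Taylor expansion that the paper itself relies on in Lemma~\ref{2.5} and Lemma~\ref{character C2}. The citation buys only brevity.

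There is one slip in wording. For a nondegenerate form $Au^2+Buv+Cv^2$ over $\mathbb{F}_p$ with $p$ odd, the Gauss sum is $p\left(\frac{B^2-4AC}{p}\right)$. That is the Legendre symbol of the discriminant itself (equivalently of $-\det$ of the Gram matrix), not of $-\mathrm{disc}$. With $\mathrm{disc}=-3$, your stated rule would give $\left(\frac{3}{p}\right)$. However, your final value $p\left(\frac{-3}{p}\right)$ and your alternative computation via $\varepsilon_p^2$ and the determinant are both correct, so nothing downstream is affected.
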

\begin{remark}\label{solution}
We deduce from Lemma \ref{lem:Kloosterman1} that $\Kl_3(a;q)=0$ unless $r\in \mathbb{Z}_p^{\times3}$ (i.e., the cube of a residue class prime $p$).
Now, depending on whether $p\equiv1 \bmod 3$ or $p\equiv2 \bmod 3$, if $r^3=a \in \mathbb{Z}_p^{\times}$, then there are either
\begin{itemize}
\item[-]
exactly 3 critical points, when $p\equiv1 \bmod 3$, or
\vskip 2mm
\item[-]
 exactly 1 critical point, when $p\equiv2 \bmod 3$.
\end{itemize}
\end{remark}

The proof of Theorem \ref{main-theorem2} consists of getting square-root cancellations in certain character sums which we record here for convenience.

Let $p \ne 3$ be a prime and $m\in\mathbb{Z}$. Suppose $\gamma_1,\gamma_2$ are integers such that $(\gamma_1,p)=(\gamma_2,p)=1$. For $k\geq 2$, define
\begin{equation}\label{certain character sums}
\mathcal{C}(m,\gamma_1,\gamma_2;p^k)
:=\frac{1}{p^{k/2}}\mathop{\sideset{}{^*}\sum
\sideset{}{^*}\sum}_{\substack{x_1, x_2\,\bmod p^k
\\ \gamma_1\overline{x_1}-\gamma_2 \overline{x_2}+m\equiv 0\bmod p^k}}
\Kl_3(x_1; p^k)\overline{\Kl_3(x_2; p^k)}.
\end{equation}

An estimate for $\mathcal{C}(m,\gamma_1,\gamma_2;p^k)$ can be obtained in an elementary manner by reducing the problem to a set of congruence conditions.
\begin{lemma}\label{2.5}
For a $p$-adic integer $\alpha\in \mathbb{Z}_p$, denote its $p$-adic order as $\nu_p (\alpha)$.
\begin{enumerate}
 \item
 If $m=0$, $\mathcal{C}(m,\gamma_1,\gamma_2;p^k)$ vanishes unless
$\gamma_1\equiv \gamma_2\bmod p^{\lfloor k/2\rfloor}$, in this case we have
$$
\mathcal{C}(m,\gamma_1,\gamma_2;p^k)\ll
p^{\lceil k/2\rceil}.
$$
\item
If $m\neq 0$, we have
$$
\mathcal{C}(m,\gamma_1,\gamma_2;p^k)\ll
p^{\min\{\nu_p(m),\lceil k/2\rceil \}}.
$$
\end{enumerate}
\end{lemma}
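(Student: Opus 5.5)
Substitute the explicit evaluation of Lemma~\ref{lem:Kloosterman1} into the definition \eqref{certain character sums} and turn the sum into a count of solutions of congruences. By Lemma~\ref{lem:Kloosterman1} and Remark~\ref{solution}, $\Kl_3(x;p^k)$ vanishes unless $x$ is a cube in $\mathbb{Z}_p^\times$, and then it is a sum of at most $3$ terms $e(3r/p^k)$ with $r^3=x$. Parametrising $x_1=r_1^3$, $x_2=r_2^3$ with $r_1,r_2$ units modulo $p^k$ (each $x_i$ being hit by at most $3$ values of $r_i$), we get, up to a bounded factor,
\begin{equation*}
\mathcal{C}(m,\gamma_1,\gamma_2;p^k)\ll \frac{1}{p^{k/2}}\Bigl|\mathop{\sideset{}{^*}\sum\sideset{}{^*}\sum}_{\substack{r_1,r_2\bmod p^k\\ \gamma_1\overline{r_1}^3-\gamma_2\overline{r_2}^3+m\equiv0\bmod p^k}} e\Bigl(\frac{3(r_1-r_2)}{p^k}\Bigr)\Bigr|.
\end{equation*}
Since the constraint fixes $r_1$ (up to three choices) in terms of $r_2$, this is a one-variable sum of length $p^k$ with phase $h(r_2)=3(r_1(r_2)-r_2)$, where $r_1(r_2)$ solves $\gamma_1\overline{r_1}^3=\gamma_2\overline{r_2}^3-m$ (and the sum is empty over those $r_2$ for which $\gamma_2\overline{r_2}^3-m$ is not a unit cube).

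I would then apply the standard $p$-adic stationary phase for prime power moduli: write $r_2=y+p^{\lceil k/2\rceil}z$ with $y$ modulo $p^{\lceil k/2\rceil}$ and $z$ modulo $p^{\lfloor k/2\rfloor}$. Taylor expansion gives $h(y+p^{\lceil k/2\rceil}z)\equiv h(y)+p^{\lceil k/2\rceil}z\,h'(y)\bmod p^k$, and summing over $z$ restricts $y$ to solutions of $h'(y)\equiv0\bmod p^{\lfloor k/2\rfloor}$, each contributing $p^{\lfloor k/2\rfloor}$. Implicit differentiation of $\gamma_1 r_1^{-3}=\gamma_2 r_2^{-3}-m$ gives $r_1'=\gamma_2r_1^4/(\gamma_1r_2^4)$, so the critical equation is $\gamma_2r_1^4\equiv\gamma_1r_2^4$. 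Together with the constraint, this forces $r_1\equiv\omega r_2$ for a $4$th-root-type relation, and substituting back yields $m\,r_2^3\equiv\gamma_2-\gamma_1\omega^{-3}\cdots$.

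\textbf{Case $m=0$.} The constraint forces $\gamma_1/\gamma_2=(r_1/r_2)^3$, and the critical equation forces $\gamma_1/\gamma_2=(r_1/r_2)^4$. Hence $r_1/r_2\equiv\gamma_1/\gamma_2$ and $(\gamma_1/\gamma_2)^3\equiv \gamma_1/\gamma_2$ modulo $p^{\lfloor k/2\rfloor}$. Moreover $h$ is then linear in $r_2$, so the sum is a complete linear exponential sum that vanishes unless its coefficient $3(r_1/r_2-1)$ vanishes modulo $p^k$ (more carefully, modulo $p^{\lfloor k/2\rfloor}$ after the above reduction). This gives the vanishing unless $\gamma_1\equiv\gamma_2\bmod p^{\lfloor k/2\rfloor}$. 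In that case the trivial bound on the remaining $y$-sum, with $O(1)$ branches, gives at most $p^{-k/2}\cdot p^{\lceil k/2\rceil}\cdot p^{\lfloor k/2\rfloor}\cdot O(1)\ll p^{\lceil k/2\rceil}$.

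\textbf{Case $m\ne0$.} The critical-point equation becomes a polynomial congruence in $r_2$ modulo $p^{\lfloor k/2\rfloor}$ with leading coefficient divisible by $m$. The number of its solutions $y$ modulo $p^{\lceil k/2\rceil}$ is bounded by $p^{\lceil k/2\rceil-\lfloor k/2\rfloor}\cdot p^{\min\{\nu_p(m),\lfloor k/2\rfloor\}}\cdot O(1)$, by Hensel's lemma applied after dividing out the $p$-power. Multiplying by $p^{\lfloor k/2\rfloor}\cdot p^{-k/2}$ gives $\ll p^{\min\{\nu_p(m),\lceil k/2\rceil\}}$.

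\textbf{Main obstacle.} The hard step is the root count in the degenerate cases, where $p^{\nu_p(m)}$ is comparable to $p^{k/2}$ and the critical congruence has repeated roots. Naive Hensel fails there, and the count must instead be done by lifting from the exact power of $p$ dividing the discriminant. The hypothesis $p\ne3$ keeps the derivatives (factors $3$, $4/3$) units, and $p=2$ may need a separate minor check.
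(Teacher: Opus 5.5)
Your argument is the paper's stationary-phase argument in different coordinates. The paper splits the arguments $x_i=p^{k/2}a_i+b_i$ of $\Kl_3$, solves the constraint for $(a_2,b_2)$, expands the cube roots to first order and sums over $a_1$, arriving at $\gamma_2b_1^{4/3}\equiv\gamma_1b_2^{4/3}$, which is your critical equation $\gamma_2r_1^4\equiv\gamma_1r_2^4$.

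Your elimination is cleaner than the paper's appeal to Hensel in \eqref{b1b2}. You set $r_1\equiv\omega r_2$ with $\omega^4\equiv\gamma_1/\gamma_2$, so $\gamma_1\omega^{-3}\equiv\gamma_2\omega$ and $mr_2^3\equiv\gamma_2(1-\omega)\bmod p^{\lfloor k/2\rfloor}$. For $\nu_p(m)<\lfloor k/2\rfloor$ this only asks for cube roots of a unit modulo $p^{\lfloor k/2\rfloor-\nu_p(m)}$. So the degenerate root count you call the main obstacle never occurs. Nor does $p=2$ cause trouble, since the expansion of $r_1(y+p^cz)$ only involves $\binom{-3}{j}$ and $\binom{-1/3}{j}$, which are $2$-adic integers.

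In part (1), the step ``$(\gamma_1/\gamma_2)^3\equiv\gamma_1/\gamma_2$'' would only give $(\gamma_1/\gamma_2)^2\equiv1$. From $t^3=\gamma_1/\gamma_2$ and $t^4\equiv\gamma_1/\gamma_2$ you get $t\equiv1$ directly. Your Ramanujan-sum argument is what actually proves (1). It gives vanishing unless $\omega\equiv1\bmod p^{k-1}$ (not $p^k$), hence unless $\gamma_1\equiv\gamma_2\bmod p^{k-1}$.

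The genuine gap is the final line of the case $m\neq0$. Your count yields
\[
p^{-k/2}\cdot p^{\lfloor k/2\rfloor}\cdot p^{\lceil k/2\rceil-\lfloor k/2\rfloor+\min\{\nu_p(m),\lfloor k/2\rfloor\}}
=p^{\lceil k/2\rceil-k/2+\min\{\nu_p(m),\lfloor k/2\rfloor\}}.
\]
For odd $k$ and $\nu_p(m)<k/2$ this is $p^{\nu_p(m)+1/2}$, not $p^{\nu_p(m)}$. Since $p$ may be as large as $q^{1/2}$, the extra $p^{1/2}$ is not absorbed by $\ll$. The paper loses exactly the same factor (it says odd $k$ ``results in an extra factor of $p^{1/2}$''), so neither argument proves (2) as stated for odd $k$.

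What is missing is second-order stationary phase. For $k=2j+1$, write $r_2=y+p^jz$ with $y\bmod p^j$ and $z=z_0+pz_1$, where $z_0\bmod p$ and $z_1\bmod p^j$. Then $h(y+p^jz)\equiv h(y)+p^jzh'(y)+p^{2j}z^2h''(y)/2\bmod p^k$. The $z_1$-sum forces $h'(y)\equiv0\bmod p^j$, and what remains is a sum over $z_0$ modulo $p$ with quadratic coefficient $h''(y)/2$.

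Differentiating $r_1'=\gamma_2r_1^4/(\gamma_1r_2^4)$ again gives $r_1''=4\gamma_2r_1^3(r_1'r_2-r_1)/(\gamma_1r_2^5)$. So at such $y$, $h''(y)$ is a unit times $r_2-r_1=r_2(1-\omega)$. By $mr_2^3\equiv\gamma_2(1-\omega)$, this is a unit exactly when $p\nmid m$.

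If $p\nmid m$, the $z_0$-sum is a nondegenerate quadratic Gauss sum of absolute value $p^{1/2}$. The total is then $p^{-k/2}\cdot p^j\cdot p^{1/2}\cdot O(1)=O(1)$.

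If $p\mid m$, the quadratic term vanishes modulo $p^k$ and the $z_0$-sum forces $h'(y)\equiv0\bmod p^{j+1}$. Your elimination, now run modulo $p^{j+1}$, leaves $O(p^{\min\{\nu_p(m)-1,\,j\}})$ values of $y\bmod p^j$. The total is $O(p^{\min\{\nu_p(m)-1,\,j\}+1/2})$.

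Both cases lie within the stated bound $p^{\min\{\nu_p(m),\lceil k/2\rceil\}}$.
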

\begin{proof}
We perform some initial transformation. First suppose that $k$ is even. Then for $i=1,2$, we can write
\begin{equation}\label{xidecomp0}
x_i=p^{k/2}a_i+b_i,\,\,1\leq a_i,b_i\leq p^{k/2},\,\,(b_i,p)=1.
\end{equation}
When $k$ is odd, we replace $p^{k/2}\mapsto p^{(k+1)/2}$ in \eqref{xidecomp0}
and proceed the same as above.
From \eqref{xidecomp0} we obtain
\begin{equation}\label{splitting}
\overline{x_i}\equiv\overline{b_i}-p^{k/2}\overline{b_i}^2a_i \bmod {p^{k}}.
\end{equation}
Plugging \eqref{splitting}
we see that the congruence
\begin{equation*}
\gamma_1\overline{x_1}-\gamma_2\overline{x_2}+m=0 \bmod {p^k},
\end{equation*}is equivalent to
\begin{equation}\label{beta2alpha2}
\begin{aligned}
&\overline{b_2} \equiv \gamma_1\overline{\gamma_2}\overline{b_1}+\overline{\gamma_2}m \bmod {p^{k/2}},\\
&a_2 \equiv \gamma_1\overline{\gamma_2}b_2^2\overline{b_1}^2\alpha_1+h(b_1) \bmod {p^{k/2}},
\end{aligned}
\end{equation}
where
\begin{equation*}
h(b_1)=\overline{\gamma_2}b_2^2\cdot\frac{(\gamma_1\overline{b_1}
-\gamma_2\overline{b_2}+m)}{p^{k/2}}.
\end{equation*}
In this case, we will use Lemma \ref{lem:Kloosterman1}. Hence, the hyper-Kloosterman sum in \eqref{certain character sums} vanishes unless we have $x_i\in \mathbb{Z}_p^{\times3}$. Suppose $r_i$ is a solution to $r_i^3 \equiv x_i \bmod p^k$. After a power series expansion, we obtain
$$
r_i \equiv b_i^{1/3}+3^{-1}b_i^{-2/3}a_ip^{k/2} \bmod p^k.
$$
Substituting this expansion into \eqref{certain character sums}, we see that
\begin{align*}
\mathcal{C}(m,\gamma_1,\gamma_2;p^k)\ll \frac{1}{p^{k/2}}& \Bigg|\;
\sideset{}{^*}\sum_{b_1\bmod p^{k/2}}
e\left(\frac{b_1^{1/3}-b_2^{1/3}+h(b_1)}{p^{k}}\right)\\
&\times\sum_{ a_1\bmod p^{k/2}}
e\left(\frac{b_1^{-2/3}-\gamma_1\overline{\gamma_2}b_2^{4/3}
\overline{b_1}^2}{p^{k/2}}a_1\right)\Bigg|.
\end{align*}
Executing the linear sum over $a_1$ and taking absolute values, we obtain the bound
\begin{equation*}
\mathcal{C}(m,\gamma_1,\gamma_2;p^k)\ll
\;\sideset{}{^*}\sum_{\substack{b_1\bmod p^{k/2}\\
b_1^{4/3}\gamma_2\equiv b_2^{4/3}\gamma_1\bmod p^{k/2}}}1.
\end{equation*}
It remains to count the solutions to
$b_1^{4/3}\gamma_2\equiv b_2^{4/3}\gamma_1\bmod p^{k/2}$. By \eqref{beta2alpha2},
this forces
\begin{equation}\label{b1b2}
 (\gamma_1\overline{\gamma_2}\overline{b_1}
+\overline{\gamma_2}m)^{4/3}\gamma_2\equiv
\overline{b_1}^{4/3}\gamma_1\bmod p^{k/2}.
\end{equation}
Suppose $m=0$ or $\nu_p(m)\geq k/2$, then the congruence becomes
$$
\gamma_1\equiv\gamma_2\bmod p^{k/2}.
$$
In this case we use the trivial bound to get
\begin{equation*}
\mathcal{C}(m,\gamma_1,\gamma_2;p^k)\ll p^{k/2}
\delta_{\gamma_1\equiv\gamma_2\bmod p^{k/2}}.
\end{equation*}
In the case $\nu_p(m) < p^{k/2}$, we divide~\eqref{b1b2} by $p^{\nu_p(m)}$. Invoking Hensel's lemma, 
it follows that $b_1$ is determined modulo $p^{k/2-\nu_p(m)}$ and therefore
\begin{equation*}
\mathcal{C}(m,\gamma_1,\gamma_2;p^k)\ll p^{\nu_p(m)}.
\end{equation*}
This completes the proof of the lemma when $k$ is even. When $k$ is odd, we use $p^{(k+1)/2}$ instead of $p^{k/2}$ in \eqref{xidecomp0} and proceed identically. This eventually results in an extra factor of $p^{1/2}$ in the final estimate as indicated in the statement of the Lemma.
\end{proof}

\section{Proof of Theorem \ref{main-theorem2}}

We closely follow the proof presented in~\cite[Section 3]{KLM}.
Throughout $q=p^k, k\geq 2$ and $p\neq3$. We only need to prove that for $(a,q)=1$,
\begin{equation}\label{a3}
\sum_{\substack{n\geq 1 \\ n\equiv a\bmod q}}
\lambda_{1\boxplus \phi}(n)V\left(\frac{n}{X}\right)
-\frac{1}{\varphi(q)} \sum_{\substack {n\geq 1 \\ (n,q)=1}}
\lambda_{1\boxplus \phi}(n)V\left(\frac{n}{X}\right) \ll (X/q)^{1-\delta},
\end{equation}
provided that $\delta$ is sufficiently small.
By the same argument as in~\cite[Section 3]{KLM},
we can conclude that
\begin{equation}\label{AFE}
\begin{split}
\sum_{\substack{n\geq 1 \\ n\equiv a\bmod q}} \lambda_{1\boxplus \phi}(n)V\left(\frac{n}{X}\right)
=&\frac{1}{\varphi(q)} \sum_{\substack{ n\geq 1 \\ (n,q)=1}}
\lambda_{1\boxplus \phi}(n)V\left(\frac{n}{X}\right)\\&+\frac{X}{q^{5/2}}
\sum_{n\geq 1}\lambda_{1\boxplus \overline{\phi}}(n)
\Kl_4(an;q)\widecheck{V}\left(\frac{n}{q^4/X}\right),
\end{split}
\end{equation}
where (see Section~\ref{2.1})
\begin{equation*}
\widecheck{V}(y)=\frac{1}{2 i\pi}\int_{(\frac{3}{2})}
\frac{L_\infty(1\boxplus\overline{\phi},s+\kappa)}
{L_\infty(1\boxplus\pi,1-s+\kappa)}\widetilde V(1-s)y^{-s}\mathrm{d}s
\end{equation*}
is a rapidly decreasing function of $y$ and $\widetilde{V}(s)$ is
the Mellin transform of $V(x)$.
Finally, combining \eqref{a3} and \eqref{AFE}, our task is neatly reduced to prove that for small $\delta>0$,
\begin{equation*}
\frac{X}{q^{5/2}}
\sum_{n\geq 1}\lambda_{1\boxplus \overline{\phi}}(n)
\Kl_4(an;q)
\widecheck{V}\left(\frac{n}{q^4/X}\right)
\ll (X/q)^{1-\delta}.
\end{equation*}

Plugging in the definition
$$\lambda_{1 \boxplus\overline{\phi}}(n)
=\sum_{\ell m=n}A(1,m),$$
we have
\begin{equation*}
\begin{split}
\frac{X}{q^{5/2}}
\sum_{n\geq 1}\lambda_{1\boxplus \overline{\phi}}(n)
\Kl_4(an;q)
\widecheck{V}\left(\frac{n}{q^4/X}\right)=
\frac{X}{q^{5/2}}\mathop{\sum\sum}_{\ell,m\geq 1}A(1,m)
\Kl_4(a\ell m;q)\widecheck{V}\left(\frac{\ell m}{q^4/X}\right).
\end{split}
\end{equation*}
By applying smooth dyadic partitions of the $\ell$-sum and the $m$-sum, we are reduced to considering sums of the form
\begin{equation}\label{LMsums}
\mathcal{B}(L,M):=\frac{X}{q^{5/2}}\sum_{\ell\geq 1}\sum_{m\geq 1}
A(1,m)\Kl_4(a \ell m;q)V_1\left(\frac{\ell}{L}\right)V_2\left(\frac{m}{M}\right)
\end{equation}
for $O(\log^2 X)$ many real numbers $L,M\geq 1$ satisfying
\begin{equation}\label{LMupperbound}
LM\ll \frac{q^4}{X}.
\end{equation}
Since $|\Kl_4(a\ell m;q)|\leq 4$ and
the Rankin--Selberg bound \eqref{GL3-Rankin--Selberg},
the trivial bound for $\mathcal{B}(L, M)$ is
\begin{equation*}
\frac{X^{1+\varepsilon}}q\bigg(\frac{LM}{q^{3/2}}\bigg),
\end{equation*}
which is good enough if $q\leq X^{2/5-\varepsilon}$,
and henceforth we assume that $q\geq X^{2/5-\varepsilon}$.
In particular, we may assume that $LM\geq q^{3/2-\varepsilon}$
for some fixed $\varepsilon>0$ that can be chosen as small as necessary.

To obtain nontrivial cancellation for the sum
\eqref{LMsums}, we split the argument into several cases.
The treatment of these depends on the relative sizes of $L$ and $M$.

\subsection{The case $M\geq q^{7/5}$}

If $L$ is small,
we apply Theorem \ref{Bound theorem} directly, getting
\begin{align*}
\mathcal{B}(L,M)&\ll_{\overline{\phi}} \frac{X^{1+\varepsilon}}{q^{5/2}}L
(p^{3/4}M^{3/4}q^{3/10}+M^{1/2}q^{13/20})\\
&\ll p^{3/4}\frac{X^{1+\varepsilon}}{q}\frac{L^{1/4}q^{9/5}}{X^{3/4}},
\end{align*}
provided $LM\ll q^4/X$.
In particular this bound is suitable as long as
$$q\leq X^{5/12-\varepsilon}L^{-5/36}$$
for some fixed $\varepsilon>0$.
In particular, since $L\geq 1$, this implies that $q\leq X^{5/12}$.
In view of this and \eqref{LMupperbound} we may assume that
$$LM\leq q^{4-12/5}=q^{8/5},$$
which implies (since we have assumed $M\geq q^{7/5}$)
that $L\leq q^{1/5}$.

\subsection{The case $L\geq q^{1/2}$}
 In that situation we can improve over the trivial bound by
 applying the Poisson summation formula (Lemma~\ref{Fourier transform}) in the $\ell$ variable, i.e.,
 \begin{align*}
\mathcal{B}(L,M)
&=\frac{X}{q^{5/2}}\sum_{m\geq 1}A(1, m)
\left(\frac{L}{q^{1/2}}\sum_{\ell \in \mathbb{Z}}
\Kl_3(am\overline{\ell};q)
\widehat{V_1}\left(\frac{\ell}{q/L}\right)\right)
V_2\left(\frac{m}{M}\right)\notag\\
&\ll\frac{X^{1+\varepsilon}}{q}\frac{M}{q}.
 \end{align*}
Here we have used $(\ell,q)=1$ (Lemma \ref{lem:Kloosterman=0-initial}).
This bound is good as long as
$M\leq q^{1-\varepsilon}$,
which occurs as soon as
\begin{equation}
    \label{goodcase3}
L\geq q^{3/5+\varepsilon}.
\end{equation}

\subsection{The case $L\leq q^{1-\varepsilon}$}

By the Cauchy--Schwarz inequality, we have
\begin{align}\label{aa}
\mathcal{B}(L,M) &\ll\frac{X}{q^{5/2}}\bigg(\sum_{m\geq 1}
|A(1, m)|^2V_2\left(\frac{m}{M}\right)\bigg)^{1/2}
\biggl(\sum_{m\geq 1}\bigg|\sum_{\ell\geq 1}\Kl_4(a\ell m;q)
V_1\left(\frac{\ell}{L}\right)\bigg|^2V_2\left(\frac{m}{M}\right)\biggr)^{1/2}
\notag \\ &\ll\frac{XM^{1/2}}{q^{5/2}}
\bigg(\sum_{\ell_1,\ell_2\geq 1}V_1\left(\frac{\ell_1}{L}\right)
\overline{V_1\left(\frac{\ell_2}{L}\right)}\sum_{m\geq 1}
\Kl_4(a\ell_1 m;q)\overline{\Kl_4(a\ell_2 m;q)}
V_2\left(\frac{m}{M}\right)\bigg)^{1/2}.
\end{align}
By applying the Poisson summation formula in the innermost $m$-sum, we get
\begin{equation*}
\sum_{m\geq 1}
\Kl_4(a\ell_1 m;q)\overline{\Kl_4(a\ell_2 m;q)}
V_2\left(\frac{m}{M}\right)=\frac{M}{q^{1/2}}\sum_{m\in \mathbb{Z}}\mathcal{C}_{a}(m,\ell_1,\ell_2;q)
\widehat{V_2}\left(\frac{m}{q/M}\right),
\end{equation*}
where
$$\mathcal{C}_{a}(m,\ell_1,l_2;q):=\frac{1}{q^{1/2}}
\sum_{\gamma\in \mathbb{Z}/q\mathbb{Z}}
\Kl_4(a\ell_1 \gamma;q)\overline{\Kl_4(a\ell_2 \gamma;q)}
e\left(\frac{\gamma m}{q}\right).$$
We write
$$
\Kl_4(a;q)=\frac{1}{q^{1/2}}\,\;\sideset{}{^*}
\sum_{x\bmod q}e\left(\frac{ax}{q}\right)
\Kl_3(\overline{x}; q).
$$
Thus
\begin{align*}
 \mathcal{C}_{a}(m,\ell_1,\ell_2;q)&=\frac{1}{q^{3/2}}
\mathop{\sideset{}{^*}\sum\sideset{}{^*}\sum}_{\substack{x,y\,\bmod q}}
\Kl_3(\overline{x}; q)\overline{\Kl_3(\overline{y}; q)}
\sum_{\gamma\in \mathbb{Z}/q\mathbb{Z}}
e\left(\frac{a\ell_1x-a\ell_2 y+m}{q}\gamma\right)\\
&=\frac{1}{q^{1/2}}\mathop{\sideset{}{^*}\sum\sideset{}{^*}\sum}_
{\substack{x,y\,\bmod q \\ a\ell_1x-a\ell_2 y+m\equiv 0 \bmod q}}
\Kl_3(\overline{x}; q)\overline{\Kl_3(\overline{y}; q)}.
\end{align*}
By Lemma~\ref{2.5}, we have the following bound
\begin{align}\label{bb}
&\frac{M}{q^{1/2}}\sum_{\ell_1,\ell_2\geq 1}V_1\left(\frac{\ell_1}{L}\right)
\overline{V_1\left(\frac{\ell_2}{L}\right)}
\sum_{m\in \mathbb{Z}}
\mathcal{C}_{a}(m,\ell_1,\ell_2;q)
\widehat{V_2}\left(\frac{m}{q/M}\right)\nonumber\\
&\ll \frac{M}{q^{1/2}}
\left(Lp^{\lceil k/2\rceil}+\frac{L^2q}{M}\right)\nonumber\\
&\ll p^{1/2}ML+L^2q^{1/2}.
\end{align}
Combining \eqref{aa} and \eqref{bb},
we obtain that the original sum can be bounded as follows
\begin{align}\label{MNbound5}
\mathcal{B}(L,M)\ll
\frac{X^{1+\varepsilon}}q\left(p^{1/2}\frac{1}{L}\frac{q^5}{X^2}
+L\frac{q^{3/2}}{X}\right)^{1/2}.
\end{align}
In view of \eqref{goodcase3} we will apply this bound only when
$L\leq q^{3/5+\varepsilon}$ for $\varepsilon>0$ small enough
(in particular so that $q^{3/5+\varepsilon}\leq q^{ 1-\varepsilon}$).
Assuming this the second term in the parentheses
on the right hand side of  \eqref{MNbound5} satisfies
$$L\frac{q^{3/2}}{X}\leq X^{-5/24+\varepsilon}.$$
Therefore, under these conditions \eqref{MNbound5} is good as soon as
\begin{equation*}
  q\leq X^{2/5-\varepsilon}L^{1/5}.
\end{equation*}

\subsection{Conclusion}

Let $L_0=X^{3/61}$ be the solution of the equation
$$X^{5/12}L_0^{-5/36}=X^{2/5}L_0^{1/5}=X^{25/61}
=X^{2/5+3/305}.$$

We need to show that for any  small enough  $\varepsilon>0$
and any prime power $q=p^k$ satisfying
$$X^{2/5-\varepsilon}\leq q\leq X^{25/61-\varepsilon},$$ one has
\begin{equation*}
\sum_{\substack{ n\geq 1 \\ n\equiv a\bmod q}}
\lambda_{1\boxplus\phi}(n)V\left(\frac{n}{X}\right)
-\frac{1}{\varphi(q)} \sum_{\substack{ n\geq 1 \\ (n,q)=1}}
\lambda_{1\boxplus\phi}(n)V\left(\frac{n}{X}\right)\ll p^{3/4}(X/q)^{1-\delta},
\end{equation*}
for some $\delta=\delta(\varepsilon)>0$.
Verbatim the same as in~\cite[Section 3.4]{KLM}, we obtain that this bound holds for any of the sums~\eqref{LMsums} for $L,M$ satisfying $1\leq LM\leq q^4/X.$ This completes the proof of Theorem \ref{main-theorem2}.

\section{Proof of Theorem \ref{main-theorem1}}

In this section we deduce Theorem \ref{main-theorem1} from Theorem \ref{main-theorem2}. 
Our analysis is similar to the one presented in~\cite[Section 2]{KLM}.

Set $\phi=\sym^2 f$. By \eqref{convident}, we have
\begin{equation*}
    \lambda_f(n)^2=\sum_{d^2r=n}\mu(d) \lambda_{1\boxplus{\phi}}(r).
\end{equation*}
Moreover, we have
\begin{align*}
  &\sum_{\substack{n\geq 1\\ n\equiv a\bmod q}}\lambda_f(n)^2 V\left(\frac{n}{X}\right)-\frac{1}{\varphi(q)}\sum_{\substack{n\geq 1\\ (n,q)=1}}\lambda_f(n)^2V\left(\frac{n}{X}\right)\\
   &=\sum_{\substack{n\geq 1\\ n\equiv a\bmod q}}\sum_{d^2r=n}\mu(d) \lambda_{1\boxplus{\phi}}(r)V\left(\frac{n}{X}\right)-\frac{1}{\varphi(q)}
   \sum_{\substack{n\geq 1\\ (n,q)=1}}\sum_{d^2r=n}\mu(d) \lambda_{1\boxplus{\phi}}(r)V\left(\frac{n}{X}\right).
\end{align*}
Let $a'=a'(a,d,q)$ be any integer such that $d^2a'\equiv a \bmod q.$
Since $(a,q)=(d,q)=1$, then the last congruence becomes $r\equiv a' \bmod q$.
Therefore, we have
\begin{align*}
  &\sum_{\substack{n\geq 1\\ n\equiv a\bmod q}}\lambda_f(n)^2V\left(\frac{n}{X}\right)-\frac{1}{\varphi(q)}\sum_{\substack{n\geq 1\\ (n,q)=1}}\lambda_f(n)^2V\left(\frac{n}{X}\right)\\
   &=\sum_{(d,q)=1}\mu(d)\left(\sum_{\substack{r\geq 1\\ r\equiv a'\bmod q}} \lambda_{1\boxplus{\phi}}(r)V\left(\frac{r}{X/d^2}\right)-\frac{1}{\varphi(q)}
   \sum_{\substack{r\geq 1\\ (r,q)=1}}\lambda_{1\boxplus{\phi}}(r)
   V\left(\frac{r}{X/d^2}\right)\right).
\end{align*}
Let $D=X^{\varpi'}$ for some $\varpi'>0$ small enough (we will choose $\varpi'>0$ later) so that
\begin{equation}\label{condition-on-q}
   q\leq (X/D^2)^{25/61-\varepsilon/10}.
\end{equation}

By applying Theorem \ref{main-theorem2}, we obtain
\begin{align*}
 \sum_{d\leq D}\mu(d)&\left(\sum_{\substack{r\geq 1\\ r\equiv a'\bmod q}} \lambda_{1\boxplus{\phi}}(r)V\left(\frac{r}{X/d^2}\right)-\frac{1}{\varphi(q)}
   \sum_{\substack{r\geq 1\\ (r,q)=1}}\lambda_{1\boxplus{\phi}}(r)
   V\left(\frac{r}{X/d^2}\right)\right)\\
   &\ll \sum_{d\leq D}p^{3/4}\left(\frac{X}{d^2q}\right)^{1-\delta}\ll p^{3/4}(X/q)^{1-\delta}
\end{align*}
holds for $\delta<1/2$.

By using $\lambda_{1\boxplus{\phi}}(n)=\sum_{r|n}\lambda_{\sym^2 f}(r)$ and $|\lambda_{\sym^2 f}(n)|\ll n^{2\varpi_f+\varepsilon}$ with $\varpi_f$ satisfying \eqref{RPbound}, we have the trivial bound
\begin{align*}
\sum_{\substack{r\geq 1\\ r\equiv a'\bmod q}} & \lambda_{1\boxplus{\phi}}(r)V\left(\frac{r}{X/d^2}\right)-\frac{1}{\varphi(q)}
   \sum_{\substack{r\geq 1\\ (r,q)=1}}\lambda_{1\boxplus{\phi}}(r)
   V\left(\frac{r}{X/d^2}\right)\\
   &\ll \frac{(X/d^2)}{q}\max_{r\ll X/d^2}|\lambda_{1\boxplus\phi}(r)|\ll X^\varepsilon\frac{(X/d^2)(X/d^2)^{2\varpi_f}}{q}.
\end{align*}
Since $$q\leq X^{\frac{25}{61(1+4\varpi_f)}-\varepsilon},$$
we see that \eqref{condition-on-q} is satisfied
for
$$D= X^{\frac{2\varpi_f}{1+4\varpi_f}+\varepsilon}.$$
We obtain
\begin{align*}
 \sum_{d > D}\mu(d)&\left(\sum_{\substack{r\geq 1\\ r\equiv a'\bmod q}} \lambda_{1\boxplus{\phi}}(r)V\left(\frac{r}{X/d^2}\right)-\frac{1}{\varphi(q)}
   \sum_{\substack{r\geq 1\\ (r,q)=1}}\lambda_{1\boxplus{\phi}}(r)
   V\left(\frac{r}{X/d^2}\right)\right)\\
   &\ll \frac{X^{1+o(1)}(X/D^2)^{2\theta_f}}{Dq}\ll (X/q)^{1-\delta'}
\end{align*}
for some $\delta'=\delta'(\varepsilon)>0$.

Putting the two bounds together we conclude that
\begin{align*}
 \sum_{\substack{n\geq 1\\ n\equiv a\bmod q}}\lambda_f(n)^2V\left(\frac{n}{X}\right)-\frac{1}{\varphi(q)}\sum_{\substack{n\geq 1\\ (n,q)=1}}\lambda_f(n)^2V\left(\frac{n}{X}\right) \ll p^{3/4}(X/q)^{1-\min(\delta,\delta')}.
\end{align*}
This completes the proof of Theorem \ref{main-theorem1}.

\section{Proof of Theorem \ref{Bound theorem}}

In this section, we provide the details of the proof for Theorem \ref{Bound theorem}.
By applying Lemma \ref{lem:Kloosterman=0-initial}, we can assume $(\ell, p)=1$ 
since otherwise the trace function vanishes.

\subsection{Applying the delta method}

In order to use the delta symbol method in Lemma \ref{DFI's},
we proceed as \cite[Eq. (3.7)]{LMS}
(see \cite[Lemma 3.2 ]{WZ} for more details) by writing $\delta_{n=0}$
in a more analytic form:
\begin{equation}\label{circle method}
 \begin{split}
   \delta_{n=0}
=&\sum_{r=0}^{\lambda}\frac{1}{C}\sum_{\substack{c\leq C\\ (c,p)=1}}\frac{1}{cp^\lambda}
\;\sideset{}{^*}\sum_{u\bmod cp^{\lambda-r}}
e\left(\frac{un}{cp^{\lambda-r}}\right)
\int_{\mathbb{R}}
  g(c,\zeta)e\left(\frac{n\zeta}{cCp^\lambda}\right)\mathrm{d}\zeta\\
&+\sum_{s=1}^{[\log C/\log p]}
\frac{1}{C}\sum_{\substack{c\leq C/p^s\\ (c,p)=1}}\frac{1}{cp^{\lambda+s}}
\sideset{}{^*}\sum_{u\bmod cp^{\lambda+s}}
e\left(\frac{un}{cp^{\lambda+s}}\right)
\int_{\mathbb{R}}
  g(p^sc,\zeta)e\left(\frac{n\zeta}{cCp^{\lambda+s}}\right)
  \mathrm{d}\zeta,
 \end{split}
\end{equation}
where $\lambda \in \mathbb{Z}$.
Instead of using the entire modulus $q$ for the conductor lowering mechanism, 
we only use a part $p^\lambda$, where $\lambda < k$ is chosen optimally later. 
This introduces more terms in the diagonal while having less impact in the off-diagonal.

Now we write
$$
S(N)=\sum_{n\geq 1}A(1, n)W\left(\frac{n}{N}\right)\\
\sum_{\substack{m \geq 1 \\ p^{\lambda}\mid n-m}}\Kl_4(m\ell;q)
V\left(\frac{m}{N}\right)\delta_{n=m},
$$
with a compactly supported smooth function
$W$ such that $\rm supp$ $W \subset [1,2]$ and $W^{(j)}\ll 1$ for $j\geq 1$.
Applying \eqref{circle method} with
$ C=\sqrt{N/p^{\lambda}}, $
we have
\begin{align*}
S(N)=\,&\sum_{n\geq1}A(1, n)W\left(\frac{n}{N}\right)
\sum_{m\geq1}\Kl_4(m\ell;q)V\left(\frac{m}{N}\right)\\
&\times\bigg\{\sum_{r=0}^{\lambda}\frac{1}{C}
\sum_{\substack{c\leq C\\ (c,p)=1}}\frac{1}{cp^\lambda}
\;\sideset{}{^*}\sum_{u\bmod cp^{\lambda-r}}
e\left(\frac{u(n-m)}{cp^{\lambda-r}}\right)
\int_{\mathbb{R}}
g(c,\zeta)e\left(\frac{(n-m)\zeta}{cCp^\lambda}\right)\mathrm{d}\zeta\\
&+\sum_{s=1}^{[\log C/\log p]}
\frac{1}{C}\sum_{\substack{c\leq C/p^s\\ (c,p)=1}}\frac{1}{cp^{\lambda+s}}
\sideset{}{^*}\sum_{u\bmod cp^{\lambda+s}}
e\left(\frac{u(n-m)}{cp^{\lambda+s}}\right)
\int_{\mathbb{R}}
  g(p^sc,\zeta)e\left(\frac{(n-m)\zeta}{cCp^{\lambda+s}}\right)
  \mathrm{d}\zeta\bigg\}.
\end{align*}
In the above sum, we only consider the first term in the braces with $r = 0$, that is,
\begin{align*}
\widetilde{S}(N):=&\;\sum_{n\geq1}A(1, n)W\left(\frac{n}{N}\right)
\sum_{m\geq1}\Kl_4(m\ell;q)V\left(\frac{m}{N}\right)\\
&\times\frac{1}{C}\sum_{1\leq c\leq C} \;
\frac{1}{cp^\lambda}\;\sideset{}{^*}
\sum_{u\bmod{cp^\lambda}}
e\left(\frac{(n-m)u}{cp^\lambda}\right)
\int_\mathbb{R}g(c,\zeta) e\left(\frac{(n-m)\zeta}
{cCp^\lambda}\right)\mathrm{d}\zeta.
\end{align*}
\begin{remark}\label{remark5.1}
The other terms are lower order terms, which can be treated similarly.
The same method works for the other sums and will
give better bounds as the lengths of those sums are shorter.
c.f.~\cite[Section 3]{HX} and~\cite[Section 2.1]{SY}.
\end{remark}
Interchanging the order of integration and summations, we get
\begin{align*}
\widetilde{S}(N)=\frac{1}{C}&\sum_{\substack{1\leq c\leq C \\ (c,p)=1}}\frac{1}{cp^\lambda}
\;\sideset{}{^*}\sum_{u\bmod{cp^\lambda}}
\int_\mathbb{R}g(c, \zeta)\sum_{n\geq1}A(1, n)
e\left(\frac{un}{cp^{\lambda}}\right)
W\left(\frac{n}{N}\right)e\left(\frac{n\zeta}{cCp^\lambda}\right)\\
&\times\sum\limits_{m\geq1}\Kl_4(m\ell;q)
e\left(-\frac{um}{cp^{\lambda}}\right)
V\left(\frac{m}{N}\right)e\left(-\frac{m\zeta}{cCp^\lambda}\right)
\mathrm{d}\zeta.
\end{align*}

\subsection{Application of summation formulas}
We now proceed to estimate $\widetilde{S}(N)$.
We first consider the sum over $m$.
Applying Poisson summation with
modulus $cp^k$ on the $m$-sum, we get
\begin{equation}\label{after poisson}
  \begin{split}
\sum_{\beta \bmod cp^k}&\Kl_4(\beta \ell; q)e\left(-\frac{u\beta}{cp^\lambda}\right)
\sum_{m\equiv\beta \bmod cp^k}V\left(\frac{m}{N}\right)
e\left(-\frac{m\zeta}{cCp^\lambda}\right) \\
&=\frac{N}{cp^k}\sum_{m\in \mathbb{Z}}\sum_{\beta \bmod cp^k}\Kl_4(\beta \ell; q)e\left(-\frac{m+up^{k-\lambda}}{cp^k}\beta\right)
\mathfrak{I}(m,c,\zeta),
  \end{split}
\end{equation}
where
\begin{equation}\label{integral I}
\mathfrak{I}(m,c,\zeta)=\int_{\mathbb{R}}V(y)
e\left(-\frac{N\zeta y}{cCp^\lambda}
+\frac{mNy}{cp^k} \right)\mathrm{d}y.
\end{equation}
By repeated integration by parts, the integration is negligibly
small if $mN/cp^k \gg N^{1+\varepsilon}/cCp^\lambda$. Thus we only need to consider
the range
$1\leq |m| \leq N^\varepsilon p^k/Cp^\lambda$.

Since $(c,p)=1$, the $\beta$-sum
factors as
\begin{equation*}
 \sum_{\beta \bmod cp^k}\Kl_4(\beta \ell;q)
e\left(-\frac{m+up^{k-\lambda}}{cp^k}\beta\right)
=c p^{k/2}\Kl_3\big(c \ell\overline{(m+up^{k-\lambda})};q\big)
\times \delta_{m\equiv -up^{k-\lambda} \bmod c}.
\end{equation*}
Now we consider the $n$-sum. Applying the $\GL_3$ Voronoi formula (Lemma~\ref{voronoiGL3}) with $w(y)=W(y/N)e(\zeta y/Ccp^\lambda)$,
we transform the $n$-sum into
\begin{equation}\label{$n$-sum after GL3 Voronoi}
cp^\lambda\sum\limits_{\pm}\sum_{n_1|cp^\lambda}\sum_{n_2}
\frac{A(n_2,n_1)}{n_1n_2}S\left(\overline{u},\pm n_2;\frac{cp^\lambda}{n_1}\right)
\mathcal{W}^{\pm}\left(n_1^2 n_2, c, \zeta\right),
\end{equation}
where by \eqref{intgeral transform-3},
\begin{equation*}
\mathcal{W}^{\pm}\left(n_1^2 n_2, c, \zeta\right)
=\frac{1}{2\pi i}\int_{(\sigma)}\left(\frac{Nn_1^2 n_2}{c^3p^{3\lambda}}\right)^{-s}
\gamma_{\pm}(s)
W^{\dag}\left(\frac{N\zeta}{cCp^\lambda},-s\right)\mathrm{d}s
\end{equation*}
with
\begin{equation*}
 W^{\dag}(\xi,s)
 =\int_{0}^{\infty}W(y)e(\xi y)y^{s-1} \mathrm{d} y.
\end{equation*}
By Stirling's formula, for $\sigma\geq -1/2$,
\begin{equation}\label{A bound}
  \gamma_{\pm}(\sigma+i\tau)\ll_{\pi,\sigma}(1+|\tau|)^{3\left(\sigma+1/2\right)}.
\end{equation}
By \cite[Lemma 5]{Mun1}, we have
\begin{equation*}
\begin{split}
W^{\dag}\left(\frac{N\zeta}{cCp^\lambda},-s\right)\ll_j&
\min \left\{ \left(\frac{1+|\mathrm{Im}(s)|}{N|\zeta|/cCp^\lambda} \right)^j ,
\left(\frac{1+N|\zeta|/cCp^\lambda}{|\mathrm{Im}(s)|} \right)^j \right\}\\
\ll_j& \min \left\{1, \left(\frac{N^{1+\varepsilon}}{cCp^\lambda|\mathrm{Im}(s)|} \right)^j \right\}.
\end{split}
\end{equation*}
This together with \eqref{A bound} implies that
\begin{align*}
\mathcal{W}^{\pm}\left(n_1^2 n_2, c, \zeta\right)&\ll_j \left(\frac{Nn_1^2 n_2}{c^3p^{3\lambda}}\right)^{-\sigma}
\int_{\mathbb{R}}
(1+|\tau|)^{3(\sigma+1/2)}\min\left\{1, \left(\frac{N^{1+\varepsilon}}{cCp^\lambda|\mathrm{Im}(s)|} \right)^j \right\}\mathrm{d}\tau\\
&\ll\left(\frac{N^{1+\varepsilon}}{cCp^\lambda}\right)^{5/2}\left(\frac{C^3n_1^2n_2}
{N^{2+\varepsilon}}\right)^{-\sigma},
\end{align*}
by choosing $j=3\sigma+5/2$ and by noting that $|\zeta|\leq N^{\varepsilon}$.

By taking $\sigma$ sufficiently large, one sees that
the $n_1,n_2$-sums in \eqref{$n$-sum after GL3 Voronoi} can be truncated at
$n_1^{2}n_2\leq N^{2+\varepsilon}/C^3$, at the cost of a negligible error.
After doing this truncation, we move the integration line to $\sigma=-1/2$ to get
\begin{equation*}
\mathcal{W}^{\pm}\left(n_1^2 n_2, c, \zeta\right)
=\left(\frac{Nn_1^2 n_2}{c^3p^{3\lambda}}\right)^{1/2}
\mathfrak{J}(n_1^2n_2, c, \zeta),
\end{equation*}
where
\begin{equation}\label{integral J}
\mathfrak{J}(n_1^2n_2, c, \zeta)
=\frac{1}{2\pi i}\int_{(\sigma)}\left(\frac{Nn_1^2 n_2}{c^3p^{3\lambda}}\right)^{-i\tau}
\gamma_{\pm}\left(-\frac{1}{2}+i\tau\right)
W^{\dag}\left(\frac{N\zeta}{cCp^\lambda},\frac{1}{2}-i\tau\right)\mathrm{d}\tau.
\end{equation}
Assembling the above results,
exchanging the orders of summations and integrations, and further breaking the
$(n_1,n_2)$-sums into dyadic segments $n_1^2n_2 \sim N_1$
with $1\ll N_1\ll N^{2+\varepsilon}/C^3$, we obtain
$$
\widetilde{S}(N)\ll \sum_{\pm}\sum_{\substack{1\ll N_1\ll N^{2+\varepsilon}/C^3\\ \text{dyadic}}}|\widetilde{S}(N,N_1)|,
$$
where
\begin{equation}\label{main case}
\begin{split}
\widetilde{S}(N,N_1)=\;&\frac{N^{3/2}}{Cp^{(k+3\lambda)/2}}
\sum_{\substack{1\leq c \leq C \\ (c, p)=1}}\frac{1}{c^{3/2}}
\sum_{\substack{1\leq |m| \leq N^\varepsilon p^k/Cp^\lambda\\ (m,c)=1}}
\sum_{\substack{n_1^2n_2\sim N_1\\ n_1|cp^\lambda}}
\frac{A(n_2,n_1)}{n_2^{1/2}}\\
&\times\mathcal{J}(n_{1}^2n_2,m,c)\;\mathcal{C}(m,c,n_1,n_2),
\end{split}
\end{equation}
with
\begin{equation}\label{integral JJ}
\mathcal{J}(n_{1}^2n_2,m,c)=\int_{\mathbb{R}}g(c,\zeta)
\mathfrak{I}(m,c,\zeta)\mathfrak{J}(n_1^2n_2, c, \zeta)
\mathrm{d}\zeta,
\end{equation}
and
\begin{equation*}
 \mathcal{C}(m,c,n_1,n_2)
 =\;\sideset{}{^*}\sum_{\substack{u\bmod{cp^\lambda}\\ m\equiv -up^{k-\lambda}\bmod c}}
\Kl_3\big(c \ell\overline{(m+up^{k-\lambda})};q\big)
S\left(\overline{u},\pm n_2;\frac{cp^\lambda}{n_1}\right).
\end{equation*}

For the sum in~\eqref{main case}, we further split it into
two sums according to $(n_1,p)=1$ or not, and write
$$\widetilde{S}(N,N_1)= \widetilde{S}_1(N,N_1)+\widetilde{S}_2(N,N_1),$$
where
\begin{equation*}
\begin{split}
\widetilde{S}_1(N,N_1)=\;&\frac{N^{3/2}}{Cp^{(k+3\lambda)/2}}
\sum_{\substack{1\leq c \leq C \\ (c, p)=1}}\frac{1}{c^{3/2}}
\sum_{\substack{1\leq |m| \leq N^\varepsilon p^k/Cp^\lambda \\ (m,c)=1}}
\sum_{\substack{n_1^2n_2\sim N_1\\ n_1|c,\ (n_1,p)=1}}\frac{A(n_2,n_1)}{n_2^{1/2}}\\
&\times\mathcal{J}(n_{1}^2n_2,m,c)\;\mathcal{C}(m,c,n_1,n_2),
\end{split}
\end{equation*}
and $\widetilde{S}_2(N,N_1)$ corresponds to the complementary sum where $p|n_1$.
\begin{remark}\label{remark5.2}
We only consider the case $(n_1,p)=1$, that is $\widetilde{S}_1(N,N_1)$,
since otherwise we have $p |n_1$ which leads to a simpler case.
More precisely, we write $n_1=n_1'n_1''$ with $n_1'|c$ and $n_1''|p^\lambda$,
then we have
\begin{equation*}
\begin{split}
\widetilde{S}_2(N,N_1)=\;&\frac{N^{3/2}}{Cp^{(k+3\lambda)/2}}
\sum_{\substack{1\leq c \leq C \\ (c, p)=1}}\frac{1}{c^{3/2}}
\sum_{\substack{1\leq |m| \leq N^\varepsilon p^k/Cp^\lambda \\ (m,c)=1}}
\sum_{n_1''|p^\lambda}\sum_{\substack{n_1'^2n_2\sim N_1/n_1''\\ n_1'|c,\ (n_1',p)=1}}\frac{A(n_2,n_1'n_2'')}{n_2^{1/2}}\\
&\times\mathcal{J}(n_1'^2n_1''^2n_2,m,c)\;\mathcal{C}(m,c,n_1'n_1'',n_2).
\end{split}
\end{equation*}
The contribution of $\widetilde{S}_2(N,N_1)$ is actually smaller, basically because the lengths of the $n_2$-sums become shorter. In fact, the strategy we use for $\widetilde{S}_1(N,N_1)$ still works for $\widetilde{S}_2(N,N_1)$. c.f.~\cite[Section 4]{HX} and~\cite[Section 2.2]{SY}.
\end{remark}
Before further analysis, we make a computation of the character sums $\mathcal{C}(m,c,n_1,n_2)$.
Writing
$$
S\left(\overline{u},\pm n_2;\frac{cp^{\lambda}}{n_1}\right)
=S\left(\overline{u}\overline{p^{\lambda}},\pm n_2\overline{p^{\lambda}};\frac{c}{n_1}\right)
S\left(\overline{u}\overline{c/n_1},\pm n_2\overline{c/n_1};p^{\lambda}\right),
$$
which follows from the Chinese remainder theorem. Then we obtain
\begin{equation}\label{character sums0}
\begin{split}
 \mathcal{C}(m,c,n_1,n_2)
=&\;S\left(-\overline{m}p^{k-\lambda}\overline{p^{\lambda}},\pm n_2\overline{p^{\lambda}};\frac{c}{n_1}\right)\\
&\;\times\sideset{}{^*}\sum_{u\bmod{p^\lambda}}
\Kl_3\big(c\ell\overline{(m+up^{k-\lambda})};q\big)
S\left(\overline{u}\overline{c/n_1},\pm n_2\overline{c/n_1};p^{\lambda}\right).
\end{split}
\end{equation}

\subsection{Applying the Cauchy--Schwarz inequality}

Applying the Cauchy--Schwarz inequality and using
the Rankin--Selberg estimate \eqref{GL3-Rankin--Selberg},
one sees that
$$
\widetilde{S}_1(N,N_1) \ll\frac{N^{3/2}N_1^{1/2}}{Cp^{(k+3\lambda)/2}}
\sum\limits_{\pm}
\sum\limits_{\substack{n_{1}\\ (n_1,p)=1,\ n_{1}| c}}
\mathbf{\Omega}^{1/2},
$$
with
\begin{align*}
\mathbf{\Omega}=\sum\limits_{n_2}\frac{1}{n_2}
U\left(\frac{n_{1}^2n_2}{N_1}\right)
\Bigg|\sum_{\substack{1\leq c \leq C \\ (c,p)=1}}\frac{1}{c^{3/2}}
\sum_{\substack{1\leq |m| \leq N^\varepsilon p^k/Cp^\lambda\\ (m,c)=1}}
\mathcal{J}(n_{1}^2n_2,m,c)\mathcal{C}(m,c,n_1,n_2)
\Bigg|^2.
\end{align*}
Here $U$ is a nonnegative smooth function on $(0,+\infty)$, supported on $[2/3,3]$, and such
that $U(x)=1$ for $x\in [1,2]$.

Opening the absolute square, we get
\begin{align*}
\mathbf{\Omega}=&\,
\sum_{\substack{1\leq c_1 \leq C \\ (c_1,p)=1}}\frac{1}{c_1^{3/2}}
\sum_{\substack{1\leq c_2 \leq C \\ (c_2,p)=1}}\frac{1}{c_2^{3/2}}
\sum_{\substack{1\leq |m_1| \leq N^\varepsilon p^k/Cp^\lambda\\ (m_1,c)=1}}
\sum_{\substack{1\leq |m_2| \leq N^\varepsilon p^k/Cp^\lambda\\ (m_2,c)=1}}
\sum_{n_2}\frac{1}{n_2}U\left(\frac{n_{1}^2n_2}{N_1}\right)\\
&\times\mathcal{J}(n_{1}^2n_2,m_1,c_1)\mathcal{C}(m_1,c_1,n_1,n_2)
\overline{\mathcal{J}(n_{1}^2n_2,m_2,c_2)\mathcal{C}(m_2,c_2,n_1,n_2)}.
\end{align*}
We break the $n_2$-sum into congruence classes
modulo $\widehat{c_1}\widehat{c_2}p^{\lambda}$ (where $\widehat{c_1}=c_1/n_1$ and $\widehat{c_2}=c_2/n_1$), and then apply the Poisson summation formula to the sum over $n_2$.
It is therefore sufficient to consider the following sum
\begin{equation}\label{Omega sum}
\begin{split}
 \mathbf{\Omega}=\sum_{\substack{1\leq c_1 \leq C \\ (c_1,p)=1}}\frac{1}{c_1^{3/2}}&
\sum_{\substack{1\leq c_2 \leq C \\ (c_2,p)=1}}\frac{1}{c_2^{3/2}}
\sum_{\substack{1\leq |m_1| \leq N^\varepsilon p^k/Cp^\lambda \\ (m_1,c_1)=1}} \\
  &\times\sum_{\substack{1\leq |m_2| \leq N^\varepsilon p^k/Cp^\lambda \\ (m_2,c_2)=1}} \frac{1}{\widehat{c_1}\widehat{c_2}p^{\lambda}}
\sum_{n_2\in \mathbb{Z}}|\mathfrak{C}(n_2)||\mathcal{H}(n_2)|,
\end{split}
\end{equation}
where the character sum $\mathfrak{C}(n_2):=\mathfrak{C}(n_2,m_1,m_2,c_1,c_2,n_1)$ is
given by
\begin{equation}\label{character sums}
 \mathfrak{C}(n_2)=
\sum_{\beta\bmod\widehat{c_1}\widehat{c_2}p^{\lambda}}
\mathcal{C}(m_1,c_1,n_1,\beta)
\overline{\mathcal{C}(m_2,c_2,n_1,\beta)}
e\left(\frac{n_2\beta}{\widehat{c_1}\widehat{c_2}p^{\lambda}}\right),
\end{equation}
and the integral $\mathcal{H}(n_2)=\mathcal{H}(n_2;m_1,m_2,c_1,c_2,n_1)$ is given by
\begin{equation}\label{integral-H}
 \mathcal{H}(n_2)=\int_{\mathbb{R}}
U\left(\xi\right)
\mathcal{J}\left(N_1\xi,m_1,c_1\right)
\overline{\mathcal{J}\left(N_1\xi,m_2,c_2\right)}
\, e\left(-\frac{N_1n_2\xi}{c_1c_2p^{\lambda}}\right)\frac{\mathrm{d}\xi}{\xi}.
\end{equation}

We have the following estimate for $\mathcal{H}(n_2)$.
\begin{lemma}\label{integral:lemma}
Let $\mathcal{H}(n_2)$ be defined as in \eqref{integral-H}. Then,
one has the following estimates.
\begin{enumerate}
 \item
 If $n_2 \gg \frac{N^\varepsilon C^2p^\lambda}{N_1}$, we have $\mathcal{H}(n_2) \ll N^{-A}$.
\item
If $n_2 \ll\frac{N^\varepsilon C^2p^\lambda}{N_1}$, we have $$\mathcal{H}(n_2)
\ll \frac{N^\varepsilon (c_1c_2)^{1/2}}{C}.$$
    \end{enumerate}
\end{lemma}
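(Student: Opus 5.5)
Both parts rest on understanding $\mathcal{J}(N_1\xi,m,c)$ as a function of $\xi\in[2/3,3]$, with size and derivative bounds. By \eqref{integral JJ}, $\mathcal{J}$ is the $\zeta$-integral of $g(c,\zeta)\,\mathfrak{I}(m,c,\zeta)\,\mathfrak{J}(N_1\xi,c,\zeta)$, and we may restrict to $|\zeta|\le N^\varepsilon$ by \eqref{g-h}.

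\textbf{Derivative bounds.} Inserting the definition \eqref{integral J} of $\mathfrak{J}$, the only $\xi$-dependence is through the factor $(N N_1\xi/(c^3p^{3\lambda}))^{-i\tau}$. The integral over $\tau$ is effectively restricted to $|\tau|\ll N^{1+\varepsilon}/(cCp^\lambda)$ by the bound for $W^\dagger$ (from \cite[Lemma 5]{Mun1}), since beyond that range the integrand decays rapidly. Each $\xi$-derivative therefore costs at most a factor $|\tau|\ll N^{1+\varepsilon}/(cCp^\lambda)$. With $C=\sqrt{N/p^\lambda}$ we have
\[
N/(Cp^\lambda)=C,
\]
so this factor is $N^\varepsilon C/c$. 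Consequently
\[
\partial_\xi^j\bigl[\mathcal{J}(N_1\xi,m_1,c_1)\overline{\mathcal{J}(N_1\xi,m_2,c_2)}\bigr]\ll \bigl(N^\varepsilon C/\min(c_1,c_2)\bigr)^j\,\sup|\mathcal{J}_1\mathcal{J}_2| .
\]

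\textbf{Part (1).} The phase in \eqref{integral-H} has frequency $N_1 n_2/(c_1c_2p^\lambda)$. Integrate by parts $j$ times; $U$ is smooth and compactly supported away from $0$, and the factor $1/\xi$ is harmless. Each step gains
\[
\frac{N^\varepsilon C/\min(c_i)}{N_1n_2/(c_1c_2p^\lambda)}\le\frac{N^\varepsilon C\,c_{\max}p^\lambda}{N_1n_2}\le\frac{N^\varepsilon C^2p^\lambda}{N_1 n_2},
\]
using $c_i\le C$. This is $\le N^{-\varepsilon/2}$ once $n_2\gg N^{\varepsilon}C^2p^\lambda/N_1$ (after adjusting $\varepsilon$), and taking $j$ large gives $N^{-A}$.

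\textbf{Part (2).} Here we use the trivial bound $\mathcal{H}(n_2)\ll\sup_\xi|\mathcal{J}(N_1\xi,m_1,c_1)||\mathcal{J}(N_1\xi,m_2,c_2)|$, which reduces the claim to $\mathcal{J}(N_1\xi,m,c)\ll N^\varepsilon (c/C)^{1/2}$. This pointwise bound is the main obstacle. For it:
\begin{itemize}
\item $\mathfrak{I}\ll1$ trivially.
\item Taking the $\zeta$-integral inside the $\tau$-integral, we must bound $\int g(c,\zeta)\,\mathfrak{I}(m,c,\zeta)\,W^\dagger\bigl(N\zeta/(cCp^\lambda),\tfrac12-i\tau\bigr)\,d\zeta$.
\item Opening $W^\dagger$ and $\mathfrak{I}$, the $\zeta$-integral becomes $\int g(c,\zeta)\,e\bigl(\zeta(x-y)N/(cCp^\lambda)\bigr)\,d\zeta$. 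This is essentially the Fourier transform of $g(c,\cdot)$ at $(x-y)C/c$, so it localizes to $|x-y|\ll N^\varepsilon c/C$ (using \eqref{g rapid decay}, integrating by parts in $\zeta$). This gains the factor $c/C$ in the $x,y$ integrals.
\item In the $\tau$-integral, stationary phase in $x$ for $W^\dagger$ gives a square-root saving $|\tau|^{-1/2}$. Combined with $\gamma_\pm(-\tfrac12+i\tau)\ll1$ and the $\tau$-range $\ll N^\varepsilon C/c$, a naive count yields $(c/C)\cdot(C/c)^{1/2}=(c/C)^{1/2}$.
\end{itemize}
The subtle point is organizing the $\tau$-integral so that the stationary-phase saving and the $\zeta$-localization can both be used at once. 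If that proves delicate, the fallback is to follow the analogous estimate in \cite[Section 3]{HX} or \cite{Mun1}, which yields exactly $\mathcal{J}\ll N^\varepsilon (c/C)^{1/2}$. Multiplying the two bounds then gives $(c_1c_2)^{1/2}N^\varepsilon/C$.
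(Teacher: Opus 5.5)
Your proposal is essentially the paper's proof: $\xi$-derivatives of $\mathcal{J}$ bring down factors of $|\tau|\ll N^\varepsilon C/c$, and repeated integration by parts in $\xi$ against the frequency $N_1n_2/(c_1c_2p^\lambda)$ gives (1). Part (2) is the trivial bound, with $\mathcal{J}\ll N^\varepsilon(c/C)^{1/2}$ coming from a length-$c/C$ localization combined with the second-derivative bound $W^{\dagger}\ll(1+|\tau|)^{-1/2}$.

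The ``subtle point'' you flag, which the paper also only asserts, is harmless, and you do not need to open $W^{\dagger}$ at all. Read the localization off $\mathfrak{I}(m,c,\zeta)=\widehat{V}\bigl(N\zeta/(cCp^\lambda)-mN/(cp^k)\bigr)$, which gives $\int|\mathfrak{I}|\,d\zeta\ll c/C$, while $|\mathfrak{J}(N_1\xi,c,\zeta)|\ll N^\varepsilon(C/c)^{1/2}$ holds pointwise for $|\zeta|\le N^\varepsilon$; the two savings then simply multiply. In the same way each $\xi$-derivative multiplies this estimate by $N^\varepsilon C/c$, giving $\xi^j\partial_\xi^j\mathcal{J}\ll N^\varepsilon(c/C)^{1/2}(N^\varepsilon C/c)^j$. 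That is the correct replacement for your formulation in terms of $\sup|\mathcal{J}_1\mathcal{J}_2|$, which is not justified as stated (though any polynomial bound suffices for part (1)).
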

\begin{proof}
By \eqref{integral JJ},
\begin{equation*}
  \xi^j \frac{\partial^j }{\partial \xi^j}\mathcal{J}\left(N_1\xi,m,c\right)=
\int_{\mathbb{R}}g(c,\zeta)
\mathfrak{I}(m,q,\zeta)\xi^j \frac{\partial^j }{\partial \xi^j}\mathfrak{J}(N_1\xi, c, \zeta)
  \mathrm{d}\zeta,
\end{equation*}
where by \eqref{integral J},
\begin{align*}
\xi^j \frac{\partial^j }{\partial \xi^j}\mathfrak{J}(N_1\xi, c, \zeta)=&
\frac{1}{2\pi}
\int_{\mathbb{R}}(-i\tau)(-i\tau-1)\cdots (-i\tau-j+1) \\&\times
\left(\frac{NN_1\xi}{c^3p^{3\lambda}}\right)^{-i\tau}\gamma_{\pm}\left(-\frac{1}{2}+i\tau\right)
W^{\dag}\left(\frac{N\zeta}{cCp^\lambda},\frac{1}{2}-i\tau\right)\mathrm{d}\tau.
\end{align*}
By definition,
\begin{equation*}
W^{\dag}\left(\frac{N\zeta}{cCp^\lambda},\frac{1}{2}-i\tau\right)=
\int_0^{\infty}W(y_2)y_2^{-1/2}e\left( -\frac{\tau}{2\pi}\log y_2
+\frac{N\zeta y_2}{cCp^\lambda} \right)\mathrm{d}y_2.
\end{equation*}
By repeated integration by parts, one sees that the above integral
is negligibly small unless $ |\tau|\asymp |\zeta|N/(cCp^\lambda):=\Xi$.
Moreover, by the second derivative test for exponential integrals,
\begin{equation}\label{11}
W^{\dag}\left(\frac{N\zeta}{cCp^\lambda},\frac{1}{2}-i\tau\right)\ll (1+|\tau|)^{-1/2}.
\end{equation}
Thus
\begin{equation*}
\begin{split}
\xi^j \frac{\partial^j }{\partial \xi^j}
\mathfrak{J}(N_1\xi, c, \zeta)=\;&\frac{1}{2\pi}
\int_{\mathbb{R}}\omega\Big(\frac{|\tau|}{\Xi}\Big)
(-i\tau)(-i\tau-1)\cdots (-i\tau-j+1) \\&\times
\left(\frac{NN_1\xi}{c^3p^{3\lambda}}\right)^{-i\tau}
\gamma_{\pm}\left(-\frac{1}{2}+i\tau\right)
W^{\dag}\left(\frac{N\zeta}{cCp^\lambda},
\frac{1}{2}-i\tau\right)\mathrm{d}\tau+O(N^{-A}),
   \end{split}
\end{equation*}
where $\omega(x)\in C_c^{\infty}(0,\infty)$
satisfies $\omega^{(j)}(x)\ll_j 1$ for any integer $j\geq 0$.
Hence
\begin{equation*}
\begin{split}
\xi^j \frac{\partial^j }{\partial \xi^j}
\mathcal{J}\left(N_1\xi,m,c\right)=\;&\frac{1}{2\pi}
\int_{\mathbb{R}}g(c, \zeta)\int_{\mathbb{R}}\omega\Big(\frac{|\tau|}{\Xi}\Big)(-i\tau)(-i\tau-1)\cdots (-i\tau-j+1) \\
&\times\int_0^{\infty}W(y_2)y_2^{-1/2}e\left( -\frac{\tau}{2\pi}\log y_2
-\frac{N\zeta y_2}{cCp^\lambda} \right)\mathrm{d}y_2\\&\times
\left(\frac{NN_1\xi}{c^3p^{3\lambda}}\right)^{-i\tau}\gamma_{\pm}\left(-\frac{1}{2}+i\tau\right)
\mathfrak{I}(m,c,\zeta)\mathrm{d}\tau\mathrm{d}\zeta+O(N^{-A}),
\end{split}
\end{equation*}
where by \eqref{integral I},
\begin{equation*}
\mathfrak{I}(m,c,\zeta)=\int_{\mathbb{R}}V(y_1)
e\left(-\frac{N\zeta y_1}{cCp^\lambda}
+\frac{mNy_1}{cp^k} \right)\mathrm{d}y_1.
\end{equation*}
Next we consider the $\zeta$-integral
\begin{equation*}
\int_{\mathbb{R}}
g(c,\zeta)e\left(\frac{\zeta N (y_2-y_1)}{cCp^\lambda} \right)
\mathrm{d}\zeta.
\end{equation*}
Repeated integration by parts shows that the above integral
is negligibly small unless $ |y_1-y_2|\ll N^{\varepsilon}cCp^\lambda/N\ll N^{\varepsilon}c/C$ for $C=\sqrt{N/p^\lambda}$, where we have used \eqref{g rapid decay}.
Moreover,
by Stirling's formula (see \cite[Eq. (9)]{Mun1}),
$$
\gamma_{\pm}\left(-\frac{1}{2}+i\tau\right)=\left(\frac{|\tau|}{e\pi}\right)^{3i\tau}\Upsilon_{\pm}(\tau),
\qquad \Upsilon_{\pm}'(\tau)\ll \frac{1}{|\tau|}.
$$
Combining these estimates with
\eqref{11}, we obtain
\begin{equation*}
\xi^j \frac{\partial^j }{\partial \xi^j}\mathcal{J}\left(N_1\xi,m,c\right)
\ll \frac{cN^{\varepsilon}}{C}
\Xi^{j+1/2}\ll N^{\varepsilon} \left(\frac{C}{c} \right)^{j+1/2}.
\end{equation*}
Hence by applying integration by parts repeatedly on the $\xi$-integral in \eqref{integral-H} and evaluating the
resulting $\xi$-integral trivially, we find that
$$
\mathcal{H}(n_2)\ll_j N^\varepsilon\; \frac{(c_1c_2)^{1/2}}{C}\;
\left(\frac{N_1n_2}{C^2p^{\lambda}}\right)^{-j}
$$
for any integer $ j\geq 0$. Therefore,
$ \mathcal{H}(n_2)$ is negligibly small unless $ n_2 \leq N^\varepsilon C^2p^{\lambda}/N_1$.
Moreover, by taking $ j=0$, one has
$$
\mathcal{H}(n_2)\ll_j \frac{N^\varepsilon(c_1c_2)^{1/2}}{C}.
$$
This completes the proof of Lemma \ref{integral:lemma}.
\end{proof}

\subsection{Character sums}\label{10}

In this section we estimate the character sums in \eqref{character sums}.

By \eqref{character sums0},
we write $\beta=\widehat{c_1}\widehat{c_2}
\overline{\widehat{c_1}\widehat{c_2}}\beta_{1}+p^{\lambda}
\overline{p^{\lambda}}\beta_{2}$, with $\beta_1\bmod p^{\lambda}$ and
$\beta_2\bmod \widehat{c_1}\widehat{c_2}$, where $\widehat{c_1}=c_1/n_1$ and $\widehat{c_2}=c_2/n_1$.
Then we obtain
$$
\mathfrak{C}(n_2)=\mathfrak{C}_1(n_2)\mathfrak{C}_2(n_2),
$$
where
\begin{align*}
\mathfrak{C}_1(n_2)=\sum_{\beta\bmod \widehat{c_1}\widehat{c_2}}
   S\left(-\overline{m_1}p^{k-\lambda}\overline{p^{\lambda}},\beta\overline{p^{\lambda}};\widehat{c_1}\right)
   S\left(-\overline{m_2}p^{k-\lambda}\overline{p^{\lambda}},\beta\overline{p^{\lambda}};\widehat{c_2}\right)
   e\left(\frac{n_2\overline{p^{\lambda}}\beta}{\widehat{c_1}\widehat{c_2}}\right),
\end{align*}
and
\begin{align*}
\mathfrak{C}_2(n_2)=&\sum_{\beta\bmod{p^\lambda}}\
  \sideset{}{^*}\sum_{u_1\bmod{p^\lambda}}
  \Kl_3\big(c_1 \ell\overline{(m_1+u_1p^{k-\lambda})};q\big)
 S\left(\overline{u_1}\overline{\widehat{c_1}}, \beta\overline{\widehat{c_1}};p^{\lambda}\right)\\
 &\times\sideset{}{^*}\sum_{u_2\bmod{p^\lambda}}
 \overline{\Kl_3\big(c_2 \ell\overline{(m_2+u_2p^{k-\lambda})};q\big)}
 S\left(\overline{u_2}\overline{\widehat{c_2}}, \beta\overline{\widehat{c_2}};p^{\lambda}\right) e\left(\frac{\overline{\widehat{c_1}\widehat{c_2}}\beta n_2}
   {p^{\lambda}}\right).
\end{align*}

The following estimate for the character sum $\mathfrak{C}_1(n_2)$ was proved in \cite[Lemma 11]{Mun3}.
\begin{lemma}\label{character C1}
We have
\begin{align*}
\mathfrak{C}_1(n_2) \ll \widehat{c_1}\widehat{c_2}(\widehat{c_1},\widehat{c_2},n_2).
\end{align*}
Moreover, for $n_2=0$, the character sums vanish unless $c_1=c_2=c$ in which case
$$
\mathfrak{C}_1(0)\ll \widehat{c}^2(\widehat{c},m_1-m_2).
$$
\end{lemma}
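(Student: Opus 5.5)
The sum $\mathfrak{C}_1(n_2)$ is the same product of two Kloosterman sums twisted by an additive character that appears in Munshi's work, up to the harmless units $\overline{p^{\lambda}}$ and $p^{k-\lambda}$, which are invertible modulo $\widehat{c_1}\widehat{c_2}$ because $(c_i,p)=1$. The plan is to open both Kloosterman sums and carry out the $\beta$-sum by orthogonality. Write
\[
S(x,\beta\overline{p^\lambda};\widehat{c_i})=\sideset{}{^*}\sum_{\alpha_i\bmod \widehat{c_i}} e\Big(\frac{x\alpha_i+\beta\overline{p^\lambda}\,\overline{\alpha_i}}{\widehat{c_i}}\Big),
\]
where $x=-\overline{m_i}p^{k-\lambda}\overline{p^\lambda}$. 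Summing over $\beta$ modulo $\widehat{c_1}\widehat{c_2}$ then gives $\widehat{c_1}\widehat{c_2}$ times the indicator of the congruence
\[
\overline{p^\lambda}\bigl(\widehat{c_2}\,\overline{\alpha_1}+\widehat{c_1}\,\overline{\alpha_2}+n_2\bigr)\equiv 0 \bmod \widehat{c_1}\widehat{c_2}.
\]
Since $p^\lambda$ is a unit, this is the same as $\widehat{c_2}\overline{\alpha_1}+\widehat{c_1}\overline{\alpha_2}+n_2\equiv0 \bmod \widehat{c_1}\widehat{c_2}$.

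It then remains to count the pairs $(\alpha_1,\alpha_2)$ satisfying this congruence, bounding the remaining phase $e(x_1\alpha_1/\widehat{c_1}+x_2\alpha_2/\widehat{c_2})$ trivially. Put $d=(\widehat{c_1},\widehat{c_2})$. Reducing the congruence modulo $\widehat{c_1}$ gives $\widehat{c_2}\overline{\alpha_1}\equiv -n_2 \bmod \widehat{c_1}$, which has solutions only if $d\mid n_2$, and then $\alpha_1$ is determined modulo $\widehat{c_1}/d$. So $\alpha_1$ has at most $d$ choices. By symmetry $\alpha_2$ also has at most $d$ choices. The full congruence modulo $\widehat{c_1}\widehat{c_2}$ then links $\alpha_2$ to $\alpha_1$, and the number of admissible pairs is at most $d=(\widehat{c_1},\widehat{c_2},n_2)$. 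This gives $\mathfrak{C}_1(n_2)\ll \widehat{c_1}\widehat{c_2}(\widehat{c_1},\widehat{c_2},n_2)$. The delicate point is to do this bookkeeping carefully, via the Chinese remainder theorem on the coprime parts and a direct analysis at primes dividing $d$, so as to get exactly one factor of the gcd and not $d^2$. This is the step I expect to be the main obstacle.

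For $n_2=0$ the congruence becomes $\widehat{c_2}\overline{\alpha_1}\equiv-\widehat{c_1}\overline{\alpha_2}\bmod \widehat{c_1}\widehat{c_2}$. Reading it modulo $\widehat{c_1}$, the unit $\overline{\alpha_1}$ forces $\widehat{c_1}\mid\widehat{c_2}$. Symmetrically $\widehat{c_2}\mid\widehat{c_1}$, so $\widehat{c_1}=\widehat{c_2}$ and hence $c_1=c_2=c$. The congruence then reduces to $\overline{\alpha_1}\equiv-\overline{\alpha_2}\bmod \widehat{c}$, that is, $\alpha_2=-\alpha_1$. Substituting this, the sum becomes
\[
\widehat{c}^{\,2}\;\sideset{}{^*}\sum_{\alpha\bmod\widehat{c}}e\Big(\frac{(x_1-x_2)\alpha}{\widehat{c}}\Big),
\]
a Ramanujan sum in $x_1-x_2$. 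Here $x_1-x_2$ is a unit times $\overline{m_2}-\overline{m_1}$, which is a unit times $m_1-m_2$. Since $|c_{\widehat{c}}(h)|\le(\widehat{c},h)$, this gives $\mathfrak{C}_1(0)\ll\widehat{c}^2(\widehat{c},m_1-m_2)$.
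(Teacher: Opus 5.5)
Your argument is correct, and it is the standard proof of the result the paper simply imports from Munshi's Lemma 11. You open both Kloosterman sums, execute the $\beta$-sum by orthogonality, bound the surviving phase trivially and count solutions; for $n_2=0$ you deduce $\widehat{c_1}=\widehat{c_2}$ and reduce to a Ramanujan sum in $\overline{m_1}-\overline{m_2}$.

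The step you flag as the main obstacle is in fact immediate. There are at most $(\widehat{c_1},\widehat{c_2})$ admissible $\alpha_1$, and none unless this gcd divides $n_2$. For each one, the congruence $\widehat{c_1}\overline{\alpha_2}\equiv-(n_2+\widehat{c_2}\overline{\alpha_1}) \bmod \widehat{c_1}\widehat{c_2}$ determines $\overline{\alpha_2} \bmod \widehat{c_2}$ uniquely, so no prime-by-prime analysis is needed.
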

To estimate the character sum $\mathfrak{C}_2(n_2)$, we use the
 strategy in \cite[Lemma 12]{Mun3} and \cite[Lemma 6.2]{SZ} to prove the following results.
\begin{lemma}\label{character C2}
Assume that $\lambda\leq 2k/3$. For a $p$-adic integer $\alpha\in \mathbb{Z}_p$, denote its $p$-adic order as $\nu_p (\alpha)$.
\begin{enumerate}
 \item
 For $n_2=0$, $\mathfrak{C}_2(0)$ vanishes unless
$m_1^4c_2\widehat{c_2}^6\equiv m_2^4c_1\widehat{c_1}^6 \bmod  p^{\lfloor\lambda/2\rfloor}$, in this case we have
\begin{align}\label{10.1}
\mathfrak{C}_2(n_2)\ll p^{3\lambda}.
\end{align}
\item
For $n_2\neq 0$, we have
\begin{align}\label{10.2}
\mathfrak{C}_2(n_2)\ll
p^{\lceil5\lambda/2\rceil+\min\{\nu_p(n_2), \lceil\lambda/2\rceil\}}.
\end{align}
\end{enumerate}
\end{lemma}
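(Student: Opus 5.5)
The plan is to open the two Kloosterman sums $S(\cdot,\cdot;p^\lambda)$ in $\mathfrak{C}_2(n_2)$, execute the $\beta$-sum by additive orthogonality, and reduce to a sum of the type $\mathcal{C}(m,\gamma_1,\gamma_2;p^k)$ already handled in Lemma \ref{2.5}. Concretely, write
$$S\left(\overline{u_i}\overline{\widehat{c_i}},\beta\overline{\widehat{c_i}};p^\lambda\right)=\sideset{}{^*}\sum_{\alpha_i\bmod p^\lambda}e\left(\frac{\overline{u_i}\,\overline{\widehat{c_i}}\,\alpha_i+\beta\overline{\widehat{c_i}}\,\overline{\alpha_i}}{p^\lambda}\right),\quad i=1,2.$$
Summing over $\beta\bmod p^\lambda$ produces $p^\lambda$ times the indicator of the congruence
$$\overline{\widehat{c_1}}\,\overline{\alpha_1}-\overline{\widehat{c_2}}\,\overline{\alpha_2}+\overline{\widehat{c_1}\widehat{c_2}}\,n_2\equiv0\bmod p^\lambda .$$
This fixes $\alpha_2$ in terms of $\alpha_1$, provided the resulting inverse is a unit; otherwise there is no contribution. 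After this step $\mathfrak{C}_2(n_2)$ equals $p^\lambda$ times a sum over $u_1,u_2,\alpha_1$ of
$$\Kl_3\big(c_1\ell\overline{(m_1+u_1p^{k-\lambda})};q\big)\,\overline{\Kl_3\big(c_2\ell\overline{(m_2+u_2p^{k-\lambda})};q\big)}$$
times additive phases in $\overline{u_1}\alpha_1$ and $\overline{u_2}\alpha_2$.

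Next I would substitute Lemma \ref{lem:Kloosterman1} for each $\Kl_3$. Since $\lambda\le 2k/3$, the shift $u_ip^{k-\lambda}$ has $p$-adic order at least $k/3$, so the cube root
$$r_i=\big(c_i\ell\,\overline{m_i+u_ip^{k-\lambda}}\big)^{1/3}$$
can be expanded $p$-adically. The term linear in $u_i$ is a multiple of $p^{k-\lambda}$, and the quadratic term has order at least $2(k-\lambda)\ge k-\lambda+k/3$. Hence in $e(3r_i/p^k)$ only a phase linear in $u_i$ modulo $p^\lambda$ survives, up to terms that I will check are still periodic. This mirrors the stationary-phase analysis in Lemma \ref{2.5}, and it is exactly where the hypothesis $\lambda\le2k/3$ enters. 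Remark \ref{solution} bounds the number of branches by $3$.

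The sums over $u_1,u_2$ then become complete sums, each of the form
$$\sideset{}{^*}\sum_{u_i}e\left(\frac{A_iu_i+B_i\overline{u_i}}{p^\lambda}\right),$$
which are Kloosterman or Salié-type sums modulo $p^\lambda$. Evaluating them by the same $p^{\lambda/2}$-splitting as in Lemma \ref{2.5}, i.e.\ writing $u=p^{\lceil\lambda/2\rceil}a+b$ and summing the linear $a$-sum, gives stationary points $b$ determined by a congruence modulo $p^{\lfloor\lambda/2\rfloor}$, with value of size $p^{\lambda/2}$. Collecting the factors $p^\lambda$ (from $\beta$), $p^{\lambda}$ (from $\alpha_1$), and $p^{\lambda/2}\cdot p^{\lambda/2}$ (from $u_1,u_2$) yields the generic size $p^{3\lambda}$. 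In the $n_2\neq0$ case, the remaining $\alpha_1$-sum is not trivially summed; it is again a stationary-phase sum whose critical-point count is governed by a congruence involving $n_2$. As in part (2) of Lemma \ref{2.5}, Hensel's lemma after dividing by $p^{\nu_p(n_2)}$ gives the saving down to $p^{\lceil 5\lambda/2\rceil+\min\{\nu_p(n_2),\lceil\lambda/2\rceil\}}$, with the ceilings accounting for odd $\lambda$.

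For $n_2=0$, the stationary point equations for $u_1$ and $u_2$, combined with the $\alpha$ congruence, should force the consistency relation $m_1^4c_2\widehat{c_2}^6\equiv m_2^4c_1\widehat{c_1}^6\bmod p^{\lfloor\lambda/2\rfloor}$. The fourth powers arise from $r_i^{-2}\sim (c_i\overline{m_i})^{-2/3}$ raised to match the $4/3$ exponents, as in \eqref{b1b2}. Outside this relation the linear sums vanish. I expect the main obstacle to be this bookkeeping: tracking the exact exponents through the cube-root expansion to obtain precisely that congruence, and controlling the higher-order Taylor terms uniformly under $\lambda\le2k/3$. The size bounds themselves should then follow by trivial estimation of the surviving variables.
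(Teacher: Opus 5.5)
Your opening step (expanding both Kloosterman sums modulo $p^\lambda$ and executing the $\beta$-sum) is the paper's. The size bound $p^{3\lambda}$ in (1) also follows from your outline (stationary phase in $u_1,u_2$, trivial summation over the surviving variable), up to the usual loss for odd $\lambda$.

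\textbf{The main gap.} The vanishing assertion in (1) and all of (2) come from cancellation in that surviving variable. This is your $\alpha_1$, which plays the role of the paper's $d$, and you either sum it trivially or dispose of it by analogy.
\begin{enumerate}
\item Stationarity in $u_1$ and $u_2$, together with the orthogonality relation, only determines the lower digits $b_1,b_2$ (up to $O(1)$ choices) as functions of $\alpha_1$. It imposes no condition on $m_1,c_1,m_2,c_2$, so it cannot force the stated congruence.
\item That congruence (equivalently $\theta_1\widehat{c_2}^2\equiv\eta_1\widehat{c_1}^2$ in the paper's notation) appears only once you also impose stationarity in the third variable. For $n_2=0$ this reads $b_2\equiv\widehat{c_1}^2\overline{\widehat{c_2}}^{\,2}b_1\bmod p^{\lambda/2}$, which is the first congruence in \eqref{ff}.
\item For $n_2\neq0$ you must actually show that the residual sum over $\alpha_1$ is $\ll p^{\lambda/2+\nu_p(n_2)}$, where now $\alpha_2=\widehat{c_1}\alpha_1\overline{(\widehat{c_2}+n_2\alpha_1)}$. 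This means deriving its critical-point congruence, checking that it is a nondegenerate polynomial congruence in $n_2\alpha_1$ with $O(1)$ roots, and counting the $\alpha_1$ lying over each root. That count is where the factor $p^{\nu_p(n_2)}$ enters.
\end{enumerate}
Part (2) of Lemma~\ref{2.5} does not supply this, since there one has a single explicit congruence in one variable. The paper handles it by splitting $u_1$, $u_2$ and $d$ into lower and upper digits simultaneously and executing the three linear sums at once. It then counts the solutions $(\beta_1,\beta_2,d_2)$ of the three congruences in \eqref{ff}, which it reduces to the polynomial congruence \eqref{6.10} in $\xi=n_2\beta_1^2$.

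\textbf{A separate error.} Your claim that for $\lambda\le 2k/3$ only a phase linear in $u_i$ survives is false once $\lambda>k/2$. With $r_{i,0}^3=c_i\ell\overline{m_i}$, the quadratic Taylor term of $3r_i/p^k$ is $\tfrac23 r_{i,0}\overline{m_i}^{\,2}u_i^2/p^{2\lambda-k}$, which is not integral in general. The inequality $2(k-\lambda)\ge k-\lambda+k/3$ does not give vanishing modulo $p^k$. What the hypothesis really gives, and how the paper uses it, is $2(k-\lambda)+\lambda/2\ge k$. After writing $u_i=b_i+p^{\lambda/2}a_i$, the higher Taylor terms drop out of the coefficient of $a_i$ and only contribute a unimodular factor depending on $b_i$. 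So your $u_i$-sums are not Kloosterman or Sali\'e sums in general. The digit splitting still gives the $p^{\lambda/2}$ bound with the same stationarity congruence. However, the phases at the critical points carry these extra terms, which is another reason to split all three variables at once rather than evaluate the $u_i$-sums first and then attack $\alpha_1$.
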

\begin{proof}
We write
\begin{equation*}
  (s_1, t_1)=(\overline{c_1\ell}m_1, \overline{c_1\ell}), \quad (s_2, t_2)=(\overline{c_2\ell}m_2, \overline{c_2\ell}).
\end{equation*}
Then opening the Kloosterman sums and executing the sum over $\beta$, we arrive at
\begin{equation}\label{character sum2}
\begin{split}
\mathfrak{C}_2(n_2)=&\;p^{\lambda}
\sideset{}{^*}\sum_{d\bmod {p^{\lambda}}}
\;\sideset{}{^*}\sum_{u_1\bmod  p^{\lambda}}
\;\sideset{}{^*}\sum_{u_2\bmod  p^{\lambda}}
\Kl_3\big(\overline{s_1+t_1u_1p^{k-\lambda}};q\big)\\
 &\times\overline{\Kl_3\big(\overline{s_2+t_2u_2p^{k-\lambda}};q\big)}
e\left(\frac{d\overline{u_2\widehat{c_2}}
-d\widehat{c_2}\overline{(\widehat{c_1}+d n_2)u_1\widehat{c_1}}}
{p^{\lambda}}\right).
\end{split}
\end{equation}
For $n_2 \equiv 0 \bmod p^{\lambda}$,
it follows that
\begin{align*}
\mathfrak{C}_2(n_2)=&\;p^{\lambda}
\;\sideset{}{^*}\sum_{u_1\bmod  p^{\lambda}}
\;\sideset{}{^*}\sum_{u_2\bmod  p^{\lambda}}
\Kl_3\big(\overline{s_1+t_1u_1p^{k-\lambda}};q\big)\nonumber\\
 &\times\overline{\Kl_3\big(\overline{s_2+t_2u_2p^{k-\lambda}};q\big)}
 R_{p^\lambda}(\overline{u_2\widehat{c_2}}
-\widehat{c_2}\overline{\widehat{c_1}^2u_1}),
\end{align*}
where $R_{q}(u)$ is the Ramanujan sum which is bounded by $(u,q)$. We deduce that
\begin{align*}
\mathfrak{C}_2(n_2)\ll p^{3\lambda}.
\end{align*}

Furthermore, we suppose that $\lambda$ is even. For $i=1,2$, we can write
\begin{equation}\label{aidecomp0}
u_i=p^{\lambda/2}\alpha_i+\beta_i,\quad 1\leq\alpha_i,\beta_i\leq p^{\lambda/2},\quad(\beta_i,p)=1.
\end{equation}
When $\lambda$ is odd, we replace $p^{\lambda/2}\mapsto p^{(\lambda+1)/2}$ in \eqref{aidecomp0} and proceed the same as above. From \eqref{aidecomp0} we obtain
\begin{equation}\label{splitting0}
\overline{u_i}=\overline{\beta_i}-p^{\lambda/2}
\overline{\beta_i}^2\alpha_i  \bmod {p^{\lambda}}.
\end{equation}
Using Lemma \ref{lem:Kloosterman1}, the hyper-Kloosterman sum in \eqref{character sum2} vanishes unless we have $s_i\in \mathbb{Z}_p^{\times3}$. It follows that
\begin{equation*}
  \Kl_3(\overline{t_ip^{k-\lambda}u_i+s_i}; p^{k})=\left(\frac{p^k}{3}\right)\sum_{r^3\equiv \overline{t_ip^{k-\lambda}u_i+s_i}\bmod p^k}e\left(\frac{3r}{p^k}\right),
\end{equation*}
where $\sum_{r^3\equiv \overline{t_ip^{k-\lambda}u_i+s_i}\bmod p^k}$
means summing over at most the three solutions (see Remark \ref{solution}) of the congruence $r^3\equiv \overline{t_ip^{k-\lambda}u_i+s_i}\bmod p^k$. To solve the cube congruence, we
consider $(t_ip^{k-\lambda}u_i+s_i)^{-1/3}$ in the $p$-adic field $\mathbb{Q}_p$. By Taylor expansion, we have
\begin{align*}
  &(t_1p^{k-\lambda}u_1+s_1)^{-1/3}
\equiv\sum_{j\geq0}p^{j(k-\lambda)}\theta_ju_1^{j}\ \bmod p^k, \\
 &(t_2p^{k-\lambda}u_2+s_2)^{-1/3}
\equiv\sum_{j\geq0}p^{j(k-\lambda)}\eta_ju_2^{j}\ \bmod p^k,
\end{align*}
where the coefficients $\theta_j$ and $\eta_j$ with $j\geq 1$ happen to be $p$-adic integers, since $p \ne 3$ is a prime.
Then we see that the character sum \eqref{character sum2} can be written as follows
\begin{equation}\label{character sum21}
\begin{split}
 p^{\lambda}
\;\sideset{}{^*}\sum_{u_1\bmod  p^{\lambda}}
\;\sideset{}{^*}\sum_{u_2\bmod  p^{\lambda}}&\;
e\left(\frac{\sum_{j\geq0}p^{j(k-\lambda)}
\theta_ju_1^{j}-\sum_{j\geq0}p^{j(k-\lambda)}\eta_ju_1^{j}}
{p^{k}}\right)  \\
&\qquad\qquad\times\sideset{}{^*}\sum_{d\bmod {p^{\lambda}}}e\left(\frac{d\overline{u_2\widehat{c_2}}
-d\widehat{c_2}\overline{(\widehat{c_1}+d n_2)u_1\widehat{c_1}}}
{p^{\lambda}}\right).
\end{split}
\end{equation}
Using \eqref{aidecomp0}, modulo $p^k$, we have
\begin{align*}
\sum_{j\geq0}&\;p^{j(k-\lambda)}\theta_ju_1^{j}
-\sum_{j\geq0}p^{j(k-\lambda)}\eta_ju_2^{j}\\
& =\sum_{j\geq 1}p^{j(k-\lambda)+\lambda/2}j\theta_j\beta_1^{j-1}\alpha_1-\sum_{j\geq 1}p^{j(k-\lambda)+\lambda/2}j\eta_j\beta_2^{j-1}\alpha_2+\sum_{j\geq 0}\theta_j\beta_1^j-\sum_{j\geq 0}\eta_j\beta_2^i\\
& =p^{k-\lambda/2}\theta_1\alpha_1
-p^{k-\lambda/2}\eta_1\alpha_2
+\sum_{j\geq 0}\theta_j\beta_1^j-\sum_{j\geq 0}\eta_j\beta_2^i.
\end{align*}
We have truncated the last sum up to $j=1 $ since $2(k-\lambda)+ \lambda/2\geq k$ by our assumption.
Substituting this expansion into \eqref{character sum21}
and writing $d=p^{\lambda/2} d_1+d_2$, with $d_1\bmod p^{\lambda/2}$ and
$d_2\bmod p^{\lambda/2}$, we see that
\begin{equation}\label{c1c2}
\mathfrak{C}_2(n_2)\ll
 p^{3\lambda/2}
\bigg| \mathop{\sideset{}{^*}\sum_{\beta_1\bmod{ p^{\lambda/2}}}
\;\sideset{}{^*}\sum_{\beta_2\bmod{ p^{\lambda/2}}}\,
 \;\sideset{}{^*}\sum_{d_2\bmod{p^{\lambda/2}}}}_{\substack{
 \overline{(\widehat{c_1}+n_2d_2)^2\beta_1}
    \widehat{c_2}^2n_2d_2- \overline{
   (\widehat{c_1}+n_2d_2)\beta_1}\widehat{c_2}^2
   +\widehat{c_1}\overline{\beta_2}\equiv 0\bmod {p^{\lambda/2}}}}
   h(\beta_1,\beta_2,d_2)\mathcal {T}_1\mathcal {T}_2\bigg|,
\end{equation}
where we have used \eqref{splitting0}, and
\begin{align*}
\mathcal {T}_1&=\sum_{\alpha_1\bmod{ p^{\lambda/2}}}
   e\left(\frac{\theta_1+\widehat{c_2}
   \overline{\widehat{c_1}(\widehat{c_1}+n_2d_2)\beta_1^2}
                d_2}{p^{\lambda/2}}\alpha_1\right),\\
\mathcal {T}_2&=\sum_{\alpha_2\bmod{ p^{\lambda/2}}}
   e\left(-\frac{\eta_1+\overline{\widehat{c_2}\beta_2^2} d_2 }
   {p^{\lambda/2}}\alpha_2 \right),
\end{align*}
and
\begin{align*}
h(\beta_1,\beta_2,d_2)=e\left(\frac{\sum_{j\geq 0}\theta_j\beta_1^j-\sum_{j\geq 0}\eta_j\beta_2^i}{p^{\lambda}}\right)
  e\left(\frac{\overline{\widehat{c_2}\beta_2}
  -\widehat{c_2}\overline{\widehat{c_1}(\widehat{c_1}+n_2d_2)\beta_1
  }}{p^{\lambda/2}}d_2\right).
\end{align*}
Thus, $\mathcal {T}_1$ vanishes unless $\theta_1+\widehat{c_2}
   \overline{\widehat{c_1}(\widehat{c_1}+n_2d_2)\beta_1^2}
                d_2\equiv 0\bmod p^{\lambda/2}$.
Similarly,
\begin{align*}
\mathcal {T}_2=p^{\lambda/2}\delta_{\eta_1+\overline{\widehat{c_2}\beta_2^2}d_2
   \equiv 0 \bmod p^{\lambda/2}}.
\end{align*}
Plugging these into \eqref{c1c2} we obtain
\begin{equation}\label{ff}
\mathfrak{C}_2 \ll p^{5\lambda/2}
\mathop{\sideset{}{^*}\sum_{\beta_1\bmod{ p^{\lambda/2}}}
\;\sideset{}{^*}\sum_{\beta_2\bmod{ p^{\lambda/2}}}\,
 \;\sideset{}{^*}\sum_{d_2\bmod{p^{\lambda/2}}}} 
_{\substack{\overline{(\widehat{c_1}+n_2d_2)^2\beta_1}
    \widehat{c_2}^2n_2d_2- \overline{
   (\widehat{c_1}+n_2d_2)\beta_1}\widehat{c_2}^2
   +\widehat{c_1}\overline{\beta_2}\equiv 0\bmod{p^{\lambda/2}}\\
   \theta_1+\widehat{c_2}
   \overline{\widehat{c_1}(\widehat{c_1}+n_2d_2)\beta_1^2}
                d_2\equiv 0\bmod p^{\lambda/2}\\
   \eta_1+\overline{\widehat{c_2}\beta_2^2}d_2
   \equiv 0 \bmod p^{\lambda/2}}} 1.
\end{equation}

To count the numbers of $\beta_1,\beta_2$ and $d_2$, we solve the three congruence
equations in \eqref{ff}.

(1) If $n_2=0$ or $\nu_p(n_2)\geq \lambda/2$, we have
\begin{align*}
\left\{\begin{array}{l}
\beta_2\equiv
\overline{\widehat{c_2}^2}\widehat{c_1}^2\beta_1
  \bmod p^{\lambda/2},\\
  d_2\equiv -\theta_1\widehat{c_1}^2\overline{\widehat{c_2}}
                \beta_1^2 \bmod p^{\lambda/2},\\
   d_2\equiv -\eta_1\widehat{c_2}\beta_2^2
   \bmod p^{\lambda/2}.
\end{array}\right.
\end{align*}
By the last two equations, one sees that $\mathfrak{C}_2(n_2)$ vanishes unless
$\theta_1\widehat{c_2}^2\equiv \eta_1\widehat{c_1}^2\bmod p^{\lambda/2}$.
This forces
\begin{equation*}
m_1^4c_2\widehat{c_2}^6\equiv m_2^4c_1\widehat{c_1}^6 \bmod  p^{\lambda/2}.
\end{equation*}
Moreover, for fixed $\beta_1,\beta_2$ and $d_2$ are uniquely determined
modulo $p^{\lambda/2}$. Therefore,
\begin{align}\label{6.7}
\mathfrak{C}_2(n_2)
\ll p^{3\lambda}.
\end{align}
The bound in \eqref{10.1} follows.

(2) If $n_2\neq 0$ and $\nu_p(n_2) < \lambda/2$, we let $\gamma=\overline{\widehat{c_1}+n_2d_2}$. Then
the three equations give
\begin{align}\label{6.8}
\left\{\begin{array}{l}
\beta_1\equiv \widehat{c_2}^2\gamma^2\beta_2\bmod p^{\lambda/2},\\
\gamma\equiv\overline{\widehat{c_1}}\big(1+\theta_1\widehat{c_1}
\overline{\widehat{c_2}} n_2
                \beta_1^2 \big)\bmod p^{\lambda/2},\\
\overline{\gamma}\equiv
\widehat{c_1}\big(1-\eta_1\overline{\widehat{c_1}}
\widehat{c_2}n_2\beta_2^2\big)\bmod p^{\lambda/2}.
\end{array}\right.
\end{align}
Plugging the second equation into the first equation in \eqref{6.8} we get
\begin{align*}
\beta_1\equiv \widehat{c_2}^2\overline{\widehat{c_1}}^2
\big(1+\theta_1\widehat{c_1}
\overline{\widehat{c_2}} n_2
                \beta_1^2\big)^2\beta_2\bmod p^{\lambda/2}.
\end{align*}
By the above equation and the last two equations in \eqref{6.8} we get
\begin{equation}\label{6.10}
\begin{split}
 \left(\theta_1\widehat{c_1}\overline{\widehat{c_2}}\right)^5\xi^5&
   +4\left(\theta_1\widehat{c_1}\overline{\widehat{c_2}}\right)^4\xi^4
   +6\left(\theta_1\widehat{c_1}\overline{\widehat{c_2}}\right)^3\xi^3
   +4\left(\theta_1\widehat{c_1}\overline{\widehat{c_2}}\right)^2\xi^2\\
&-\theta_1\eta_1\widehat{c_1}^4\overline{\widehat{c_2}}^4\xi^2
   +\theta_1\widehat{c_1}\overline{\widehat{c_2}}\xi
   -\eta_1\widehat{c_1}^3\overline{\widehat{c_2}}^3\xi
   \equiv 0 \bmod p^{\lambda/2},
\end{split}
\end{equation}
where $\xi=n_2\beta_1^2$. Thus there are at
most $5$ roots modulo $p^{\lambda/2}$ for $\xi$.
Invoking Hensel's lemma, we see that
there are at most $10$ solutions modulo $p^{\lambda/2-\nu_p(n_2)}$ for $\beta_1$.
For fixed $\xi$,
$\gamma$ is uniquely determined modulo $p^{\lambda/2}$ and for fixed $\gamma$ and $\beta_1$,
$\beta_2$ is uniquely determined modulo $p^{\lambda/2}$ by the first equation in \eqref{6.8}.
Then by the last congruence equation in \eqref{ff}, $d_2$ is uniquely determined modulo $p^{\lambda/2}$.
Therefore,
\begin{align*}
\mathfrak{C}_2(n_2)\ll p^{5\lambda/2+\nu_p(n_2)}.
\end{align*}
By \eqref{6.7} and \eqref{6.10}, the bound in \eqref{10.2} follows.
When $\lambda$ is odd, we use $p^{(\lambda+1)/2}$ instead of $p^{\lambda/2}$ in \eqref{aidecomp0} and proceed identically. This will clearly result in an extra factor of $p^{3/2}$ in the final estimate as indicated in the statement of the lemma.
\end{proof}

\subsection{Completion of the proof}

We treat the cases where $n_2=0$ and $n_2\neq 0$ separately
and denote their contributions to $\mathbf{\Omega}$ in \eqref{Omega sum} by
$\mathbf{\Omega}_0$ and $\mathbf{\Omega}_{\neq 0}$, respectively.

For $n_2=0$, we necessarily have $c_1=c_2=c$.
For some $\alpha\geq 1$, $p^\beta\|m_1$ and $p^\beta\|m_2$ with $\beta<\lceil\frac{\alpha}{4}\rceil$,
the solutions of $m_1^4\equiv m_2^4 \bmod  p^{\alpha}$ are $m_1=\alpha m_2$ where
$\alpha^4\equiv 1 \bmod  p^{\alpha-4\beta}$. There are at most $4$ different solutions of $x^4-1=0$
in $\mathbb{Z}_p$ for $p\geq 2$. Then using Hensel's lemma, there are at most $4$ different solutions of $x^4-1=0$
in $\mathbb{Z}_{p^{\alpha-4\beta}}$. The solutions of $m_1^4\equiv m_2^4 \equiv 0 \bmod  p^{\alpha}$ are $m_1=v_1p^{\lceil\frac{\alpha}{4}\rceil}$, $m_2=v_2p^{\lceil\frac{\alpha}{4}\rceil}$ for some $v_1, v_2 \in \mathbb{Z}_{p^{\alpha-4\beta}}$.

Splitting the sum over $m_1$ and $m_2$ according as $m_1=m_2$ or not,
and applying Lemma \ref{integral:lemma} and Lemma \ref{character C2} (1), we have
(set $v=\lceil \lambda/2 \rceil$)
\begin{equation*}
  \begin{split}
  \mathbf{\Omega}_0 \ll& \frac{1}{C}\sum_{\substack{1\leq c \leq C \\ (c,p)=1}}\frac{1}{c}
\sum_{0\leq \beta<\lceil\frac{\lambda-v}{4}\rceil}
\sum_{\substack{1\leq |m_1| \leq N^\varepsilon p^k/Cp^\lambda \\ (m_1,c)=1,\ p^\beta\|m_1}}
\sum_{\substack{1\leq |m_2| \leq N^\varepsilon p^k/Cp^\lambda\\ m_1^4\equiv m_2^4 \bmod p^{\lambda/2}}}
\frac{1}{\widehat{c}^2p^{\lambda}}
\widehat{c}^2(\widehat{c},m_1-m_2)p^{3\lambda} \\
&+\frac{1}{C}\sum_{\substack{1\leq c \leq C \\ (c,p)=1}}\frac{1}{c}
\sum_{\substack{1\leq |m_1| \leq N^\varepsilon p^k/Cp^\lambda \\ (m_1,c)=1,\ p^{\lceil\frac{\lambda-v}{4}\rceil}\mid m_1}}
\sum_{\substack{1\leq |m_2| \leq N^\varepsilon p^k/Cp^\lambda
\\ (m_1,c)=1,\ p^{\lceil\frac{\lambda-v}{4}\rceil}\mid m_1}}
\frac{1}{\widehat{c}^2p^{\lambda}}
\widehat{c}^2(\widehat{c},m_1-m_2)p^{3\lambda} \\
\ll& \frac{N^\varepsilon p^{k+\lambda}}{C^2}\Big(1+\frac{p^{k-2\lambda+v}}{C}\Big)
+\frac{N^\varepsilon p^{2k-2\lceil\frac{\lambda-v}{4}\rceil}}{C^3}\\
\ll& \frac{p^{k+\lambda}}{C^2}+\frac{p^{2k-\lambda/4}}{C^3}.
   \end{split}
\end{equation*}
Then we deal with the case $n_2 \neq 0$.
By Lemma \ref{integral:lemma} and Lemma \ref{character C2} (2), we get
\begin{equation*}
  \begin{split}
     \mathbf{\Omega}_{\neq 0}\ll&\; \frac{1}{C}\sum_{\substack{1\leq c_1 \leq C \\ (c_1,p)=1}}\frac{1}{c_1}
\sum_{\substack{1\leq c_2 \leq C \\ (c_2,p)=1}}\frac{1}{c_2}
\sum_{\substack{1\leq |m_1| \leq N^\varepsilon p^k/Cp^\lambda \\ (m_1,c_1)=1}}
\sum_{\substack{1\leq |m_2| \leq N^\varepsilon p^k/Cp^\lambda \\ (m_2,c_2)=1}} \frac{1}{\widehat{c_1}\widehat{c_2}p^{\lambda}}\\
&\;\times\sum_{n_2\ll N^\varepsilon C^2p^\lambda/N_1}
\widehat{c_1}\widehat{c_2}(\widehat{c_1},\widehat{c_2},n_2)
p^{\lceil5\lambda/2\rceil+\min\{\nu_p(n_2), \lceil\lambda/2\rceil\}}\\
\ll&\;p^{3/2}N^{\varepsilon}\frac{p^{2k+3\lambda/2}}{CN_1}.
   \end{split}
\end{equation*}
Recall that $1\ll N_1\ll N^{2+\varepsilon}/C^3$ and $C=\sqrt{N/p^{\lambda}}$.
We further imply
\begin{align*}
\widetilde{S}_1(N,N_1)&\;\ll\frac{N^{3/2}N_1^{1/2}}{Cp^{(k+3\lambda)/2}}
  \left(\frac{p^{k+\lambda}}{C^2}
  +\frac{p^{2k-\lambda/4}}{C^3}
  +p^{3/2}N^{\varepsilon}\frac{p^{2k+3\lambda/2}}{CN_1}\right)^{1/2}\\
  &\;\ll N^{3/4+\varepsilon}p^{3\lambda/4}+N^{1/2+\varepsilon}p^{k/2+3\lambda/8}
  +p^{3/4}N^{3/4+\varepsilon}p^{k/2-\lambda/2}.
\end{align*}
Taking $\lambda=\lfloor2k/5\rfloor$, we get
\begin{equation*}
\widetilde{S}_1(N,N_1)\ll p^{3/4}N^{3/4+\varepsilon}q^{3/10}+N^{1/2+\varepsilon}q^{13/20}.
\end{equation*}
As we point out in Remark~\ref{remark5.1} and Remark~\ref{remark5.2}, all the other cases
are similar and in fact easier. Hence, we finally
prove Theorem \ref{Bound theorem}.


\end{document}